\newtheorem{theorem}{Theorem}
\newtheorem{lemma}{Lemma}
\newtheorem{proposition}{Proposition}
\newenvironment{proof}[1][Proof]{\noindent\textit{#1.} }{\ \rule{0.5em}{0.5em}}
\def\R{\mathbb{R}}
\def\L{\mathbb{L}}
\begin{document}

\author{Cristina Butucea and Pierre Vandekerkhove \\
Universit\'{e} Paris-Est Marne-la-Vall\'{e}e}
\title{Semiparametric mixtures of symmetric distributions }
\maketitle

\begin{abstract}
We consider in this paper the semiparametric mixture of two
distributions  equal  up to a shift parameter. The model is said
to be semiparametric  in the sense that
the mixed distribution is not supposed to belong to a parametric family.
In order to insure the identifiability  of the model it is assumed that the mixed
distribution is symmetric, the model being then defined by the mixing
proportion,
two location parameters, and the probability density function of the mixed
distribution. We propose a new class of $M$-estimators of these parameters
based on a Fourier approach, and prove that they are
$\sqrt{n}$-consistent under mild regularity conditions.  Their
finite-sample properties are illustrated by a Monte Carlo study and a
benchmark real dataset is also studied  with our method.

\end{abstract}
\vspace{3cm}
\noindent{\bf AMS 2000 subject classifications.} Primary 62G05, 62G20; secondary 62E10. \\
\noindent{\bf Key words and phrases.}  Asymptotic normality, consistency, contrast
estimators, Fourier transform, identifiability, inverse problem, semiparametric, two-component mixture model.

\bigskip

\newpage 

\section{Introduction}

The probability density functions (pdf) of $d$-variate multicomponent  mixture models  are defined by
\begin{eqnarray}\label{Mixture}
g(x)=\sum_{i=1}^k \lambda_i f_i(x),\quad \quad x\in \R^d,
\end{eqnarray}
where the unknown proportions $\lambda_i$  ($\lambda_i\geq 0$ and $\sum_{i=1}^k \lambda_i=1$) and the unknown pdf $f_i$ are to be estimated. Generally the $f_i$'s are supposed to belong to a parametric family of density functions turning the inference problem for model (\ref{Mixture})
into a purely parametric estimation problem. There exists an extensive literature on this subject including the monographs of
Everitt and Hand (1981), Titterington
{\it et al.} (1985) or McLachlan and Peel (2000), which provide a good overview of the existing  methods in this case such as maximum likelihood, minimum chi-square, moments method, Bayesian approaches etc. Note that the estimation of the number of components $k$
in model (\ref{Mixture}) may also be a crucial issue leading to various rates of convergence for maximum likelihood estimators, as discussed
by Chen (1995). In that case, the selection model is an important topic, see for example Dacunha-Castelle \& Gassiat  (1999), Lemdani \&  Pons (1999),  and Leroux (1992).
In addition the choice of a parametric family for the $f_i$'s may be difficult when few informations are known from each subpopulations.
However, model (\ref{Mixture}) is generally nonparametrically nonidentifiable without additionnal assumptions.  This is no longer true when training data
are available from each subpopulation; see for example Cerrito (1992),  Hall (1981), Lancaster \& Imbens (1996),   Murray \& Titterington (1978), and Qin (1999).
Hall and Zhou (2003) first considered the case where no parametric assumptions
are made about the $f_i$'s involved in model (\ref{Mixture}). These authors looked at $d$-variate mixtures of two distributions, each having independent components, and proved that, under mild regularity conditions, their model is identifiable when $d\geq 3$. They propose in addition
$\sqrt{n}$-consistent estimators of the $2d$ univariate marginal cumulative distribution functions and the mixing proportion. Even if model
(\ref{Mixture}) is not nonparametrically identifiable there exists for $d=1$ and $k\geq 2$, many real data sets in the statistical literature  for which such a model is used under parametric assumptions on the $f_i$'s, such as the Old Faithfull dataset, see Azzalini \& Bowman (1990), which corresponds to time measurement  (in minute) between eruptions of the Old Faithfull geyser in Yellowstone National Park, USA. Another famous  example  deals with average amounts of precipitation (rainfall) in inches for United States cities (from the Statistical abstract of the United States, 1975; see McNeil (1977).These data sets  are  both included in the \texttt{R}  statistical package.\\
To model from a semiparametric point of view this type of data  ($d=1$ and $k\geq 2$), Bordes, Mottelet \& Vandekerkhove (2006) (in abreviate BMV) and Hunter, Wang \& Hettmansperger (2007) (in abreviate HWH) proposed jointly  to consider i.i.d. sample data $(X_{1},...,X_{n})$ drawn from a common pdf  $g$ satisfying
\begin{eqnarray}\label{BMV}
g(x)=\sum_{i=1}^k \lambda_i f(x-\mu_i), \quad\quad x\in \mathbb R,
\end{eqnarray}
where $\mu_i \in \R$, $\lambda_i \geq 0$ for all $i \in \{1,...,k\}$ such that $\sum_{i=1}^k \lambda_i = 1$ and $f$ is an unknown pdf. When $f$ is supposed to be symmetric about zero, that is $f(x)=f(-x)$ for all $x\in \R$, the above authors proposed $M$-estimation methods based on the cumulative distribution function (cdf) in order to estimate  separately the Euclidean and functional part of model (\ref{BMV}).
The crucial part of their work deals with the  identifiability of model (\ref{BMV}) under the simple symmetry assumption on $f$. Their basic results are established in BMV, Theorem 2.1 and HWH, Theorem 1, 2 and Corollary 1.
The mixed density $g$ in (\ref{BMV}) can also be seen as the density of i.i.d. observations $X_i$ in a convolution model:
\begin{equation}\label{deconv}
X_i = Z_i+\varepsilon_i, \quad i=1,...,n,
\end{equation}
where $Z_i$'s are i.i.d. with common pdf $f$ and independent of i.i.d. errors $\varepsilon_i$'s with discrete law such that $P(\varepsilon = \mu_i) = \lambda_i$, for $i=1,...,k$. Previous results mean that, if $k$ is known and $f$ is supposed to be symmetric about 0, then we can identify the law of the errors and esimate nonparametrically the pdf $f$. Let us notice that the mixture problem in (\ref{BMV}) and the deconvolution problem in (\ref{deconv}) are the same.  They are both an inverse problem with unknown operator (i.e. convolution with an unknown law having support on $k$ unknown points).
In particular when $k=2$, $\lambda_1:=p_0$ and $(\mu_1,\mu_2):=(\alpha_0,\beta_0)$, according to Theorem 2.1. in BMV,  such a model is  identifiable  if  the Euclidean parameter $\theta_0:=(p_0,\alpha_0,\beta_0)\in [0,1/2)\times{ \mathbb R}^2\setminus \Delta$, where $\Delta=\left\{(x,x); \, x\in \mathbb R\right\}$ and the mixed density $f$ is symmetric about~0. When $k=2$, BMV  prove, under mild conditions, that both the Euclidean parameter and the cumulative distribution function of $f$ of model (\ref{BMV}) are estimated almost surely  at the rate $n^{-1/4+\alpha}$, for all $\alpha >0$ (see Theorem 3.3 and 3.4). When $k=2$ or 3, HWH  prove under mild conditions, the strong  consistency of their estimator, and establish, under very technical conditions, its asymptotic normality (see Theorems 3 and 4 therein).

In this paper we propose to investigate a new estimation method.
Let us first recall that  BMV propose an iterative procedure to invert the operator and a contrast which is based on the cdf $G$ and the symmetry of the underlying unknown pdf $f$.
HWH introduce a contrast based on the cdf of the observations $G$ and
estimate the euclidean parameter using the symmetry property of the unknown pdf $f$.
Here, we use Fourier analysis to invert the operator and see that under identifiability assumptions the inverse problem is well posed. Then we construct a contrast based on characteristic functions of our data which allows to estimate $\theta$ when $f$ is symmetric. This contrast is a functional of $g$ which is estimated by a U-statistic of order 2 at parametric rate under very mild smoothness assumption on $f$ (Sobolev smoothness larger than 1/4).
Our procedure is easier to deal with and allows to get a central limit theorem for the estimator of $\theta$ under much simpler conditions than those of Theorem 4 in HWH. Moreover, we define a kernel estimator of the pdf $f$ and prove that it attains the same nonparametric rate as in the direct problem of density estimation. The inverse problem does not affect the pointwise rate of convergence of the density estimator. Our estimators and convergence results generalize to the mixture model with $k\geq 3$ components, as soon as the model verifies identifiability assumptions. Such assumptions are known for $k=3$ only, see Corollary 1 in HWH.

The paper is organized as follows: in Section 2 we propose a contrast
function based on a Fourier transform of the dataset pdf and derive our
estimation method; in Section 3 we present our main asymptotic result
which concern the $\sqrt{n}$-rate  of convergence for the Euclidean part
of the parameter and show that the classical nonparametric rate of
convergence  is achieved for our inverse Fourier  nonparametric estimator;
Section 4 is dedicated to auxiliary results and proofs; in Section 5 we
propose a Monte Carlo study of our estimators on several  simulated
examples and implement our method on a real dataset which deals with the
average amounts of precipitation (rainfall) in inches for United States
cities, see  McNeil (1977).

\section{Estimation procedure}

We observe $X_1,\ldots,X_n$ independent, identically distributed random variables having
common pdf $g$ in the model
\begin{eqnarray}\label{model}
g(x)=p_0 f(x-\alpha_0 )+(1-p_0 )f(x-\beta_0 ),  \quad\quad x\in \mathbb R,
\end{eqnarray}
where $\theta_0:=(p_0,\alpha_0,\beta_0)$ denotes the unknown value of the Euclidean  parameter
and $f \in \L_2$ is unknown, symmetric pdf in a large nonparametric class of functions.

For identifiability reasons, let $\theta_0$ belong to a compact set
$\Theta \subset (0,1/2) \times \R^2\setminus \Delta$. Therefore, there are positive $P_*, \, P$, which are smaller than 1/2, such that $p_0 \in [P_*,P]$.

Note that in case $p_0 = 0$ we can still identify $\beta_0$ but not $\alpha_0$. As this case reduces to the estimation of the location of an unknown symmetric pdf $f$ as in Beran (1978), we do not consider this case further on.

\bigskip

From now on, we denote by $f^*(u) = \int_{\R} e^{ixu} f(x) dx$ the Fourier transform and recall
that if $f^* \in \L_1$ we have the inversion formula $f(x) = (2\pi)^{-1} \int_{\R} e^{-iux} f^*(u) du$.

\bigskip

Let us denote  $M(\theta,u):=pe^{iu\alpha}+(1-p)e^{iu\beta}$,
for all $\theta \in \Theta$ and $u \in \mathbb R$,
and see that it cannot be $0$ as soon
as $p\neq 1/2$.  It is enough to notice that $(1-2P)^2\leq|M(\theta,u)|^2\leq 1$
for all $(u,\theta)\in \mathbb R\times \Theta$.

The contrast uses the symmetry of the underlying, unknown pdf $f$. For the first time
in the literature of mixture models,
we relate the symmetry of $f$ to the fact that its Fourier transform has no imaginary part.
More precisely, in model (\ref{model})
$$
g^*(u) = (p_0 e^{iu\alpha_0 }+(1-p_0 )e^{iu\beta_0 })f^{\ast }(u)
 = M(\theta_0,u) f^*(u), \quad\quad u\in \mathbb R.
$$
When $f$ is supposed to be symmetric about $0$, we can hope that
$Im (g^*(u)/M(\theta,u))=0$, for all $u\in \mathbb R$,  if and only  if $\theta=\theta_0$.
This basic result is formally stated in the following theorem.
\begin{theorem}\label{theoImag}
Consider model (\ref{BMV}) with $f$ symmetric about 0 and $\theta_0\in \Theta$. Then
we have $Im\left(g^{\ast }/M(\theta,\cdot)\right)=0$ for some $\theta\in \Theta$ if
and only if $\theta=\theta_0$.
\end{theorem}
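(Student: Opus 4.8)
The plan is to dispatch the ``if'' direction in one line and then to spend the real effort on ``only if'', which is an identifiability statement recast in Fourier terms. For the easy direction, note that a real symmetric density has a real Fourier transform, so $f^*$ is real-valued; hence if $\theta=\theta_0$ then $g^*/M(\theta_0,\cdot)=f^*$ is real and its imaginary part vanishes.

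For the converse, I would first reduce the functional identity to a statement about $M$ alone. Writing $g^*=M(\theta_0,\cdot)f^*$ and using that $f^*$ is real, one gets
\[
Im\left(\frac{g^*(u)}{M(\theta,u)}\right)=f^*(u)\,Im\left(\frac{M(\theta_0,u)}{M(\theta,u)}\right)=\frac{f^*(u)}{|M(\theta,u)|^2}\,Im\left(M(\theta_0,u)\overline{M(\theta,u)}\right).
\]
Since $|M(\theta,u)|^2\ge(1-2P)^2>0$, this is zero exactly when $f^*(u)\,Im(M(\theta_0,u)\overline{M(\theta,u)})=0$. Now $f^*$ is continuous with $f^*(0)=1$, so $f^*\ne0$ on a neighbourhood of the origin, while $u\mapsto Im(M(\theta_0,u)\overline{M(\theta,u)})$ is a finite sum of sines, hence real-analytic; a function analytic on $\R$ that vanishes on an interval vanishes identically. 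Therefore the hypothesis forces $Im(M(\theta_0,u)\overline{M(\theta,u)})=0$ for every $u\in\R$.

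The meaning of this is transparent: $M(\theta_0,\cdot)\overline{M(\theta,\cdot)}$ is the characteristic function of the convolution of the mixing measure $p_0\delta_{\alpha_0}+(1-p_0)\delta_{\beta_0}$ with the reflection of $p\delta_{\alpha}+(1-p)\delta_{\beta}$, so its being real for all $u$ says this finite measure is symmetric about $0$; equivalently, since the ratio $M(\theta_0,\cdot)/M(\theta,\cdot)$ is real, equals $1$ at $u=0$ and never vanishes, the continuous arguments of $M(\theta_0,\cdot)$ and $M(\theta,\cdot)$ coincide. I would exploit the latter to pin down the heavy location first. Factoring out the dominant atom, $M(\theta,u)=e^{iu\beta}\big((1-p)+p\,e^{iu(\alpha-\beta)}\big)$, and since $p<1/2$ the bracket stays in the half-plane $\{Re>0\}$ (its real part is at least $1-2p>0$), so its argument is bounded and $\arg M(\theta,u)=u\beta+O(1)$. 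Matching the phases of $M(\theta_0,\cdot)$ and $M(\theta,\cdot)$ then gives that $u(\beta-\beta_0)$ is bounded on $\R$, whence $\beta=\beta_0$.

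With $\beta=\beta_0$ in hand, the identity $Im(M(\theta_0,u)\overline{M(\theta,u)})=0$ collapses to a three-frequency trigonometric identity $p_0(1-p)\sin(uA)-(1-p_0)p\sin(uB)+p_0 p\sin(u(A-B))\equiv0$ with $A=\alpha_0-\beta_0\neq0$ and $B=\alpha-\beta_0$. Because the three coefficients are strictly positive products, linear independence of sines with distinct frequencies forbids all three frequencies $A,B,A-B$ from being distinct in absolute value; a short case check shows that every coincidence other than $A=B$ forces one of $p_0$, $p$, $p_0 p$ or $p_0(1-p)$ to vanish, which is impossible, while $A=B$ (that is $\alpha=\alpha_0$) reduces the identity to $(p_0-p)\sin(uA)\equiv0$ and hence $p=p_0$. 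I expect this last elimination of the spurious ``label-swap'' coincidences to be the main obstacle: it is precisely the point where the standing assumption $p_0,p\in(0,1/2)$ is indispensable, since at $p=1/2$ the mixing weights become symmetric and the model is genuinely non-identifiable.
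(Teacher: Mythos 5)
Your proof is correct, and its first half coincides exactly with the paper's: the same algebraic identity $Im\bigl(g^*(u)/M(\theta,u)\bigr)=\frac{f^*(u)}{|M(\theta,u)|^2}Im\bigl(M(\theta_0,u)\bar M(\theta,u)\bigr)$, followed by the same use of $f^*(0)=1$ and continuity to conclude that $Im\bigl(M(\theta_0,\cdot)\bar M(\theta,\cdot)\bigr)$ vanishes near the origin. Where you diverge is in what happens next: the paper stops there and delegates the remaining identifiability step to the proof of Theorem 2.1 in BMV, whereas you complete it in-house. Your completion is sound on every point I checked: the extension from a neighbourhood of $0$ to all of $\R$ by real-analyticity of a finite sine sum is legitimate; the phase argument (positivity of $Re\bigl((1-p)+pe^{iu(\alpha-\beta)}\bigr)\ge 1-2p$ keeps the continuous argument of $M(\theta,\cdot)$ within $O(1)$ of $u\beta$, and the ratio $M(\theta_0,\cdot)/M(\theta,\cdot)$ being real, positive at $0$ and nonvanishing forces the continuous arguments to agree, hence $\beta=\beta_0$) is a clean way to isolate the dominant location; and the resulting three-frequency identity $p_0(1-p)\sin(uA)-(1-p_0)p\sin(uB)+p_0p\sin(u(A-B))\equiv 0$ does, after the case check on coincidences among $|A|,|B|,|A-B|$, leave only $A=B$ and then $p=p_0$ — I verified each coincidence case and each one other than $A=B$ forces a product of weights in $(0,1/2)\times(1/2,1)$ to vanish. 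What the paper's route buys is brevity and consistency with the cdf-based identifiability framework of BMV; what yours buys is a self-contained, purely Fourier-side proof that makes explicit exactly where $p,p_0<1/2$ and $\alpha_0\ne\beta_0$ enter, which is arguably better suited to this paper's methodology and would also make the extension to $k=3$ components discussed after Proposition~\ref{propcontrast} more transparent.
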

\begin{proof}
Notice that for all $\theta\in \Theta$ such that $Im\left(g^{\ast }/M(\theta,\cdot)\right)=0$ we explicitly have
$$
Im\left(\frac{g^{\ast}(u)}{M(\theta,u)}\right)
=Im\left(f^\ast(u)\frac{M(\theta_0,u)}{M(\theta,u)} \right)
= \frac{f^\ast(u)}{|  M(\theta,u) |^2}  Im\left((M(\theta_0,u)\bar M(\theta,u))\right)=0,
$$
for all $u\in \mathbb R$. As $f^*(0)=1$, we get that $Im(M(\theta_0,\cdot)\bar{M}(\theta,\cdot))$ is null in a neighborhood of $0$ which leads, following the proof of Theorem 2.1 in BMV,
to the wanted result  $\theta=\theta_0$.
\end{proof}

Assuming $g^\ast $ known we can recover
the true value of the Euclidean  parameter  by minimizing the discrepancy measure $S$ defined by
\begin{eqnarray}\label{Stheta}
S(\theta):=\int_{\mathbb R} \left(Im\left( \frac{g^{\ast }(u)}{M(\theta,u)}\right)\right)^2dW(u),
\quad\quad \theta\in \Theta,
\end{eqnarray}
where $W$ is  a Lebesgue-absolutely continuous probability measure supported by $\R$.

Note that we can also write
$$
    S(\theta)
    = \int_{\R} \left[\frac 1{2i} \left( \frac{g^{\ast }(u)}{M(\theta,u)} - \frac{\bar{g^{\ast }}(u)}{\bar{M}(\theta,u)} \right)\right]^2 dW(u).
$$
From now on, $\bar z$ denotes the complex conjugate of $z$.

\begin{proposition}\label{propcontrast}
The function $S$ in (\ref{Stheta}) is a contrast function, i.e. for all $\theta \in \Theta$, $S(\theta)\geq 0$ and $S(\theta)=0$ if and only if $\theta=\theta_0$.
\end{proposition}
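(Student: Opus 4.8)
The plan is to show that $S$ is nonnegative and vanishes exactly at $\theta_0$, reducing the second claim to Theorem~\ref{theoImag}. First I would observe that the integrand in (\ref{Stheta}) is the square of a real quantity, namely $\left(Im\left(g^\ast(u)/M(\theta,u)\right)\right)^2$, and that $W$ is a positive measure; hence $S(\theta)\geq 0$ for every $\theta\in\Theta$ with no further argument. This disposes of the first half of the contrast property immediately.

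For the equivalence, I would split into the two implications. The easy direction is that $S(\theta_0)=0$: since $f$ is symmetric about $0$, its Fourier transform $f^\ast$ is real-valued, and $g^\ast(u)/M(\theta_0,u)=f^\ast(u)$ by the factorization $g^\ast=M(\theta_0,\cdot)f^\ast$ recorded just before Theorem~\ref{theoImag}; therefore $Im\left(g^\ast(u)/M(\theta_0,u)\right)=0$ for all $u$, so the integrand vanishes identically and $S(\theta_0)=0$. For the converse, suppose $S(\theta)=0$. Because the integrand is nonnegative and continuous in $u$, and $W$ is absolutely continuous with support equal to all of $\R$ (so that $dW$ has a density that is positive almost everywhere), $S(\theta)=0$ forces the continuous function $u\mapsto Im\left(g^\ast(u)/M(\theta,u)\right)$ to vanish $W$-almost everywhere, and then everywhere on $\R$ by continuity. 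Thus $Im\left(g^\ast/M(\theta,\cdot)\right)=0$ identically, and Theorem~\ref{theoImag} yields $\theta=\theta_0$.

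The only point requiring care is the passage from $S(\theta)=0$ to the pointwise identity $Im\left(g^\ast/M(\theta,\cdot)\right)=0$ on all of $\R$. This is where the hypotheses on $W$ enter: one needs that a nonnegative continuous function whose integral against $dW$ is zero must vanish everywhere, which uses both the continuity of the integrand and the fact that $W$ charges every nonempty open set (guaranteed by $W$ being Lebesgue-absolutely continuous and supported by $\R$). Continuity of $u\mapsto Im\left(g^\ast(u)/M(\theta,u)\right)$ follows from the continuity of $g^\ast$ (a characteristic function) together with the lower bound $|M(\theta,u)|^2\geq(1-2P)^2>0$ established in the excerpt, which keeps the denominator bounded away from zero uniformly in $u$ and $\theta$. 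Once this pointwise vanishing is secured, the result is a direct appeal to Theorem~\ref{theoImag}, and no further computation is needed.
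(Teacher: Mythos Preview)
Your proof is correct and follows essentially the same approach as the paper's: both use continuity of $u\mapsto Im\big(g^\ast(u)/M(\theta,u)\big)$, the full support of $W$, and Theorem~\ref{theoImag}. The only cosmetic difference is that the paper argues the contrapositive (if $\theta\neq\theta_0$, find $u_0$ where the imaginary part is nonzero, extend by continuity to an interval, and bound $S(\theta)$ below by a positive quantity), whereas you argue directly that $S(\theta)=0$ forces the integrand to vanish identically; one minor imprecision is your parenthetical that the density of $W$ is ``positive almost everywhere''---full support does not quite say this, but you only use (and correctly justify) that $W$ charges every nonempty open interval, which is all that is needed.
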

\begin{proof}
The Fourier transform $f^{\ast}$ being continuous, the same holds for $Im\left(\frac{g^{\ast }}{M(\theta,\cdot)}\right)$. By Theorem 1, if $\theta\neq \theta_0$ there exists $u_0\in \mathbb R$  such that  $Im\left(\frac{g^{\ast }(u_0)}{M(\theta,u_0)}\right)\neq 0$, and there exists $\varepsilon >0$ and $\gamma>0$ such that
$Im\left(\frac{g^{\ast }(u)}{M(\theta,u)}\right)>\varepsilon$ on $[u_0-\gamma,u_0+\gamma]$. It follows that
\begin{eqnarray*}
S(\theta)\geq \varepsilon ^2 \int_{u_0-\gamma}^{u_0+\gamma} dW(u)>0.
\end{eqnarray*}
Otherwise if $\theta=\theta_0$ it is straightforward to check that $S(\theta)=0$.
\end{proof}

\noindent{\it Discussion.}  We point out that  basic results similar to Theorem \ref{theoImag} and Proposition \ref{propcontrast}, can be established for model (\ref{BMV}) when $k=3$ under sufficient identiability conditions.  Indeed, in that case, it is enough to replace $\theta$ by $(\lambda_1, \lambda_2, \mu_1, \mu_2, \mu_3)^T$ and $M(\theta,u)$ by $\sum_{j=1}^3 \lambda_j e^{iu\mu_j}$ and check that  the analog of Theorem \ref{theoImag} can be established following the Proof of Lemma A. 1, under conditions provided in Corollary 1, in HWH.  Finally, similar estimators to those in Sections \ref{Contrmini} and \ref{nonp} and asymptotic results like those established in Section \ref{mainresult}  for  $k=2$, can be established with a little extra work for  $k=3$.


\subsection{Contrast minimization for the Euclidean parameter}\label{Contrmini}

Let the estimator of $\theta_0$ be the following M-estimator
\begin{eqnarray}\label{estimateur}
\hat \theta_n=\arg\min_{\theta\in \Theta} S_n(\theta),
\end{eqnarray}
where $S_n(\theta)$, depending on some parameter $h>0$ (small with $n$),
is the following estimator of $S(\theta)$
\begin{eqnarray}\label{estimS}
S_n(\theta) = \frac{-1}{4 n(n-1)}
\int_{|u|\leq 1/h} \sum_{j \not= k, j,k=1}^n
\left( \frac{e^{i uX_k}}{M(\theta,u)} - \frac{e^{-i uX_k}}{M(\theta,-u)} \right)
\left( \frac{e^{i uX_j}}{M(\theta,u)} - \frac{e^{-i uX_j}}{M(\theta,-u)} \right) dW(u).
\end{eqnarray}
The estimator $S_n(\theta)$ is inspired by kernel estimators of quadratic functional
of the pdf $f$ as previously studied in Butucea (2007). It is written here in the
Fourier domain. It is known that by removing the diagonal terms in the double sum
({\it i.e.}  taking $j \not = k$) the bias is reduced with respect to the estimator where we plug
an estimator of $g^*$ into $S(\theta)$.

Let us denote by
\begin{eqnarray*}
Z_k(\theta,u)&:=& \frac{e^{i uX_k}}{M(\theta,u)} - \frac{e^{-i uX_k}}{M(\theta,-u)}, \\
 J(\theta,u)&:=&\frac{g^{\ast }(u)}{M(\theta,u)} - \frac{{g^{\ast }}(-u)}{{M}(\theta,-u)}.
\end{eqnarray*}
Then it is easy to see that
\begin{eqnarray*}
S_n(\theta) & = & \frac {-1}{4n(n-1)} \sum_{j \not= k, j,k=1}^n \int_{|u|\leq 1/h}
Z_k(\theta,u) Z_j(\theta,u) dW(u),\\
S(\theta) & = & -\frac 14 \int_{\R} J^2(\theta,u) dW(u),
\end{eqnarray*}
and that $E[Z_k(\theta,u)] = J(\theta,u)$.

\subsection{Kernel based nonparametric estimator}\label{nonp}

After estimating the Euclidean parameter, we want to estimate the nonparametric function $f$. We
suggest to use cross-validation for a kernel estimator as follows.
We denote by $\hat \theta_{n, -k}$ the leave-one-out estimator of $\theta_0$, which uses the sample without the $k$-th observation. Then we plug this in the classical nonparametric kernel estimator, whenever the unknown $\theta_0$ is required.
This procedure gives, in Fourier domain,
\begin{equation}\label{kernelfour}
 f^*_{n} (u) = \frac 1n \sum_{k=1}^n \frac{K^*(b_n u)e^{iuX_k}}{M(\hat \theta_{n,-k},u)},
\end{equation}
where $K$ the kernel ($\int K = 1$ and $K \in \mathbb{L}_2$) and $b_n$ the bandwidth are properly chosen.
Note that $G_n^*(u):=K^*(b_n u)/M(\hat \theta_{n,-k},u)$ is in $\L_1$ and $\L_2$ and has an inverse
Fourier transform which we denote by $G_n(u/b_n)/b_n$. Therefore, the estimator of $f$ is
\begin{equation}\label{kernelest}
f_n(x) = \frac 1{nb_n} \sum_{k=1}^n G_n\left( \frac{x-X_k}{b_n}\right).
\end{equation}

It is important to notice at this step, that the estimator $f_n$ is obtained by inversion
of a nonparametric kernel estimator 
\begin{eqnarray}\label{gkern}
g_n(x) = \frac 1{n b_n} \sum_{k=1}^n K\left( \frac{x-X_k}{b_n}\right),
\end{eqnarray}{
with kernel $K$ and bandwidth $b_n$. The inversion is done in Fourier domain with the estimated 
$\hat \theta_{n,-k}$ instead of the true $\theta_0$:
$$
f_n^*(u) = \frac{g_n^*(u)}{M(\hat \theta_{n,-k},u)}.
$$
When dealing with  the rain fall dataset studied  in Section 4, we propose to consider, as in BMV,  the version $\tilde f_n$ of  the estimator $ f_n(x)$ (which has a negative part due to the small number of observations) defined by 
\begin{eqnarray}\label{tildf}
\tilde f_n(x)=\frac{ f_n(x) \mathbb{I}_{f_n(x)\geq 0}}{\int_\R f_n(x) \mathbb{I}_{ f_n(x)\geq 0}} .
\end{eqnarray}

\section{Main results}\label{mainresult}

Let us state first several assumptions.

\noindent {\bf Assumption A} Let $W: \R \to \R^+$ be a cumulative distribution function of some random variable which admits finite absolute moments up to the third order:
$$
\int_{\R} (1+ |u|+u^2 + |u|^3) dW(u) <\infty.
$$

\noindent {\bf Assumption B} We assume that the underlying probability density $f$ belongs
to a ball of radius $L>0$ in the Sobolev space of functions having smoothness $\beta >0$:
$$
W(\beta,L) = \left\{ f: \R \to \R_+: \int f=1, \int |f^*(u)|^2 |u|^{2\beta} du \leq L\right\},
$$
where $f^*$ denotes the Fourier transform of the function $f$.

The weight function $W$ has been introduced for integrability of our estimator $S_n(\theta)$ of the criterium $S(\theta)$ and its derivatives with respect to $\theta$. It is completely arbitrary and it may help compute numerically the values of our integrals by Monte-Carlo simulation, but it slightly reduces the asymptotic efficiency of $\hat \theta_n$. We could have used integrals with respect to the Lebesgue measure for highest efficiency of $\hat \theta_n$, but this would require stronger assumptions of smoothness and moments for the unknown probability density function $f$.

\begin{proposition}\label{convS_n}
For each $\theta\in \Theta$, the empirical contrast function $S_n(\cdot)$ defined in (\ref{estimS})
with $h\to 0$ when $n\to \infty$, is such that
$$
\sup_{f\in W(\beta, L)} \sup_{\theta \in \Theta} E\left[ \left( S_n(\theta) - S(\theta)\right)^2\right]
\leq \frac {L^2}{(1-2P)^4}h^{4\beta} + \frac{1}{(1-2P)^2 n} +\frac{1}{(1-2P)^4n^2},
$$
as $n \to \infty$.
\end{proposition}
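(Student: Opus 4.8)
The plan is to split the mean squared error into squared bias plus variance,
$$E\left[(S_n(\theta)-S(\theta))^2\right] = \left(E[S_n(\theta)]-S(\theta)\right)^2 + \mathrm{Var}(S_n(\theta)),$$
and to treat the two pieces separately. The first thing I would record is a simplification of the kernel. Since $M(\theta,-u)=\overline{M(\theta,u)}$ and $g^*(-u)=\overline{g^*(u)}$, one has $Z_k(\theta,u)=2i\,Im\!\left(e^{iuX_k}/M(\theta,u)\right)$ and $J(\theta,u)=2i\,Im\!\left(g^*(u)/M(\theta,u)\right)$, so the summand $-\tfrac14 Z_kZ_j$ becomes the real, symmetric kernel $H(X_j,X_k)=\int_{|u|\le 1/h} Im\!\left(e^{iuX_j}/M(\theta,u)\right)Im\!\left(e^{iuX_k}/M(\theta,u)\right) dW(u)$, and $S_n(\theta)=\binom{n}{2}^{-1}\sum_{j<k}H(X_j,X_k)$ is a genuine $U$-statistic of order $2$.

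For the bias, excluding the diagonal $j=k$ and using independence together with $E[Z_k]=J$ gives $E[H(X_1,X_2)]=\int_{|u|\le 1/h}\left(Im(g^*/M)\right)^2 dW$, so that $E[S_n(\theta)]-S(\theta)=-\int_{|u|>1/h}\left(Im(g^*(u)/M(\theta,u))\right)^2 dW(u)$; the whole bias therefore stems from truncating the frequency integral at $1/h$. To bound it I would use $g^*=M(\theta_0,\cdot)f^*$ with $|M(\theta_0,u)|\le 1$ and the uniform lower bound $|M(\theta,u)|\ge 1-2P$ from the excerpt, giving $\left(Im(g^*/M)\right)^2\le |f^*(u)|^2/(1-2P)^2$. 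On $\{|u|>1/h\}$ the elementary inequality $1\le (h|u|)^{2\beta}$, the Sobolev constraint $\int |f^*(u)|^2|u|^{2\beta}du\le L$ (Assumption B), and domination of $dW$ by Lebesgue measure then yield a bias bounded by $h^{2\beta}L/(1-2P)^2$; squaring reproduces the first term $L^2h^{4\beta}/(1-2P)^4$.

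For the variance I would invoke the exact second-moment formula for an order-$2$ $U$-statistic, $\mathrm{Var}(S_n)=\frac{4(n-2)}{n(n-1)}\zeta_1+\frac{2}{n(n-1)}\zeta_2$, with $\zeta_2=\mathrm{Var}(H(X_1,X_2))$ and $\zeta_1=\mathrm{Var}\!\left(E[H(X_1,X_2)\mid X_1]\right)$. The degenerate term is immediate: $|Im(e^{iux}/M)|\le (1-2P)^{-1}$ and $W(\R)=1$ give $|H|\le (1-2P)^{-2}$, whence $\zeta_2\le (1-2P)^{-4}$ and the $1/n^2$ contribution. The linear term carries the $1/n$ contribution: I would write the H\'ajek projection $E[H(X_1,X_2)\mid X_1]=\int_{|u|\le1/h}Im(e^{iuX_1}/M)\,Im(g^*/M)\,dW$ and aim to produce the power $(1-2P)^{-2}$ announced in the statement, the mechanism being the sharper estimate $|Im(e^{iuX}\overline{M(\theta,u)})|\le |M(\theta,u)|$, which spares one power of $|M|$ relative to the crude pointwise bound $|Im(e^{iuX}/M)|\le(1-2P)^{-1}$.

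I expect the main obstacle to be precisely the sharp bookkeeping of the powers of $(1-2P)^{-1}$ in $\zeta_1$. A direct Cauchy--Schwarz treatment of $\mathrm{Var}\!\left(\int Im(g^*/M)\,Im(e^{iuX}/M)\,dW\right)$, bounding the covariance of the frequency components by the product of standard deviations, loses a factor and produces $(1-2P)^{-4}$ rather than the claimed $(1-2P)^{-2}$; extracting the sharper power forces one to use $|Im(e^{iuX}\overline M)|\le|M|$ and to keep the quantity $\int(Im(g^*/M))^2\,dW$ intact rather than majorizing it at the outset. A secondary technical point, needed for the stated uniformity over $\theta\in\Theta$ and $f\in W(\beta,L)$, is the passage from $dW$ to Lebesgue measure in the bias estimate, which is where the normalization $\int dW=1$ and the moment condition of Assumption A enter.
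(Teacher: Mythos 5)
Your proposal follows essentially the same route as the paper: the bias is exactly the tail $\int_{|u|>1/h}\bigl(Im(g^*/M)\bigr)^2\,dW$ controlled via the Sobolev condition (both you and the paper implicitly use that $dW$ is dominated by Lebesgue measure here), and the variance is handled by the same Hoeffding-type split into a degenerate part of order $(1-2P)^{-4}n^{-2}$ and a linear projection part of order $n^{-1}$, with the same uniform bounds $|Z_k|,|J|\le 2(1-2P)^{-1}$. The one point you flag as an obstacle --- obtaining $(1-2P)^{-2}$ rather than $(1-2P)^{-4}$ in the $n^{-1}$ term --- is not genuinely resolved by your proposed mechanism (since $|Im(e^{iuX}\bar M)|/|M|^2\le 1/|M|$ is the same crude bound), but the paper itself simply asserts $E\bigl[\bigl(\int Z_1 J\,dW\bigr)^2\bigr]\le 4(1-2P)^{-2}$ at that step without further justification, so your version only differs in a constant and delivers the same $n^{-1}$ rate.
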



An easy consequence of the Theorem is that
$|S_n(\theta)-E(S_n(\theta))| = O_P(n^{-1/2})$ as
$n \to \infty$.

Moreover, if we choose $h = o(1) n^{-1/(4\beta)}$ the squared
bias of $S_n(\theta)$ is infinitely smaller when compared to
its variance. So the mean squared error converges at $n^{-1}$ rate
as soon as $\beta >1/4$.

\begin{theorem}\label{cons}
The estimator $\hat \theta_n$  defined in (\ref{estimateur})  converges in probability to the true value of the
Euclidean parameter $\theta_0$ as $n\rightarrow \infty$.
\end{theorem}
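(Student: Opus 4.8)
The plan is to run the classical consistency argument for minimum-contrast estimators: combine the well-separation of the minimum of the limiting contrast $S$ with a uniform law of large numbers for $S_n$ over the compact set $\Theta$. First I would record what is already available. Proposition \ref{propcontrast} shows that $S$ is nonnegative on $\Theta$ and vanishes only at $\theta_0$; moreover $\theta\mapsto S(\theta)$ is continuous (by dominated convergence, using $|M(\theta,u)|\geq 1-2P$ to dominate the integrand uniformly in $\theta$). Since $\Theta$ is compact, $\theta_0$ is then a \emph{well-separated} minimizer: for every $\epsilon>0$ the quantity $\delta(\epsilon):=\inf\{S(\theta):\theta\in\Theta,\ \|\theta-\theta_0\|\geq\epsilon\}$ is the infimum of a continuous, strictly positive function over a compact set, hence $\delta(\epsilon)>0$.

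The core step is the uniform convergence $\sup_{\theta\in\Theta}|S_n(\theta)-S(\theta)|\to 0$ in probability. Proposition \ref{convS_n} already gives pointwise control, namely $E[(S_n(\theta)-S(\theta))^2]\to 0$ uniformly in $\theta$, so by Markov's inequality $S_n(\theta)$ converges to $S(\theta)$ in probability for each fixed $\theta$. To upgrade this to uniformity I would show that the family $\{S_n\}$ is equicontinuous, in fact uniformly Lipschitz in $\theta$ with a deterministic constant. Since $|e^{iuX_k}|=1$ and $|M(\theta,\pm u)|\geq 1-2P$, the products $Z_k(\theta,u)Z_j(\theta,u)$ are bounded by $4(1-2P)^{-2}$ and their $\theta$-derivatives are bounded by $C(1+|u|)(1-2P)^{-3}$, the factor $(1+|u|)$ coming from differentiating $e^{iu\alpha}$ and $e^{iu\beta}$ inside $M$. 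Integrating against $dW$ and using the first-order moment bound of Assumption A, which controls $\int_{|u|\leq 1/h}(1+|u|)\,dW(u)\leq\int_{\R}(1+|u|)\,dW(u)<\infty$ uniformly in the growing truncation, one finds that $\theta\mapsto S_n(\theta)$ is Lipschitz with a constant independent of $n$ and of the sample; the same bound holds for $S$. Covering $\Theta$ by finitely many balls of small radius and invoking pointwise convergence at their centers then yields $\sup_{\theta\in\Theta}|S_n(\theta)-S(\theta)|\to 0$ in probability by the standard net argument.

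Finally I would close the argument. By definition of $\hat\theta_n$ we have $S_n(\hat\theta_n)\leq S_n(\theta_0)$, and $S_n(\theta_0)\to S(\theta_0)=0$ in probability. Combined with the uniform convergence just established,
$$
0\leq S(\hat\theta_n)\leq S_n(\hat\theta_n)+\sup_{\theta\in\Theta}|S(\theta)-S_n(\theta)|\leq S_n(\theta_0)+\sup_{\theta\in\Theta}|S(\theta)-S_n(\theta)|,
$$
whose right-hand side tends to $0$ in probability, so that $S(\hat\theta_n)\to 0$ in probability. Since well-separation gives the inclusion $\{\|\hat\theta_n-\theta_0\|\geq\epsilon\}\subseteq\{S(\hat\theta_n)\geq\delta(\epsilon)\}$, we conclude $P(\|\hat\theta_n-\theta_0\|\geq\epsilon)\to 0$ for every $\epsilon>0$, i.e. $\hat\theta_n$ converges to $\theta_0$ in probability.

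I expect the main obstacle to be precisely the passage from the pointwise $L_2$-bound of Proposition \ref{convS_n} to a genuine uniform law of large numbers. The delicate point is that the integration domain $|u|\leq 1/h$ expands as $n\to\infty$, so the equicontinuity modulus must be controlled uniformly in $h$; this is exactly where the moment condition in Assumption A is needed, since it keeps $\int_{\R}(1+|u|)\,dW(u)$ finite and thereby makes the Lipschitz constant of $S_n$ independent of both $n$ and the truncation level.
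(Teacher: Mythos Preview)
Your proposal is correct and follows essentially the same route as the paper. Both arguments rest on the three ingredients you identify: pointwise convergence of $S_n(\theta)$ to $S(\theta)$ from Proposition~\ref{convS_n}, a deterministic Lipschitz bound for $\theta\mapsto S_n(\theta)$ (which the paper records separately as Lemma~\ref{LipschS_n}, with exactly the constant $4(1-2P)^{-3}\int(1+|u|)\,dW(u)$ you derive), and well-separation of the minimum of $S$ on the compact $\Theta$; the only cosmetic difference is that the paper, following Dacunha-Castelle and Duflo, works directly with the event $\{\hat\theta_n\notin B_0\}$ and a finite cover of $\Theta\setminus B_0$, whereas you first package the same cover-plus-equicontinuity step into a uniform law $\sup_\theta|S_n-S|\to 0$ and then conclude.
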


\begin{theorem}\label{NA}
The estimator $\hat \theta_n$  defined in (\ref{estimateur}) with $h\to 0$ such that
$h = o(1) n^{-1/(4\beta)}$ is asymptotically normally distributed:
$$
\sqrt{n} (\hat \theta_n - \theta_0) \stackrel{d}{\to} N(0,\Sigma), \mbox{ as } n\to \infty,
$$
where $\Sigma = \mathcal{I}^{-1} V \mathcal{I}$,
$\mathcal{I}=\mathcal{I}(\theta_0) = -\frac 12 \int_\R \dot J(\theta_0,u) \dot J ^\top (\theta_0,u) dW(u)$,
$V=\frac 14 E(U_1(\theta_0) U_1^\top (\theta_0))$
and $U_1(\theta_0) = \int_\R Z_1(\theta_0,u) \dot J(\theta_0,u) dW(u)$.
\end{theorem}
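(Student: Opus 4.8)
The plan is to treat $\hat\theta_n$ as a classical $M$-estimator and combine the consistency already established in Theorem~\ref{cons} with a one-term Taylor expansion of the estimating equation. Since $\theta_0$ lies in the interior of the compact set $\Theta$ and $\hat\theta_n \to_P \theta_0$, with probability tending to one the minimiser is interior and satisfies the first-order condition $\dot S_n(\hat\theta_n)=0$, where $\dot S_n$ denotes the gradient of $S_n$ in $\theta$. A coordinatewise mean-value expansion around $\theta_0$ gives
$$
0=\dot S_n(\hat\theta_n)=\dot S_n(\theta_0)+\ddot S_n(\bar\theta_n)\,(\hat\theta_n-\theta_0),
$$
for some $\bar\theta_n$ on the segment joining $\hat\theta_n$ and $\theta_0$, so that, once $\ddot S_n(\bar\theta_n)$ is shown invertible with probability tending to one,
$$
\sqrt n\,(\hat\theta_n-\theta_0)=-\bigl[\ddot S_n(\bar\theta_n)\bigr]^{-1}\sqrt n\,\dot S_n(\theta_0).
$$
Everything then reduces to two facts: a central limit theorem for the score $\sqrt n\,\dot S_n(\theta_0)$, and the convergence in probability of the Hessian $\ddot S_n(\bar\theta_n)$ to $\mathcal I$.

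For the score, I would exploit that $\dot S_n(\theta_0)$ is a $U$-statistic of order two with kernel $-\tfrac14\int_{|u|\le 1/h}\bigl(\dot Z_j Z_k+Z_j\dot Z_k\bigr)\,dW$ evaluated at $\theta_0$. The crucial simplification is that at the true parameter $J(\theta_0,\cdot)\equiv 0$: indeed $g^*=M(\theta_0,\cdot)f^*$ and, $f$ being symmetric, $f^*$ is real and even, whence $g^*(u)/M(\theta_0,u)=f^*(u)=f^*(-u)=g^*(-u)/M(\theta_0,-u)$. Consequently $\E[\dot S_n(\theta_0)]=0$, and in the Hoeffding decomposition the linear (H\'ajek) projection equals $-\tfrac1{2n}\sum_{k}\int_{|u|\le 1/h}Z_k(\theta_0,u)\dot J(\theta_0,u)\,dW(u)$, a sum of i.i.d., centred (again because $J(\theta_0,\cdot)=0$), real random vectors. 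As $h\to 0$ the truncation $\int_{|u|\le1/h}$ tends to $\int_\R$, so each summand converges to $-\tfrac14 U_k(\theta_0)$, and the ordinary multivariate CLT yields $\sqrt n\,\dot S_n(\theta_0)\to_d N(0,V)$ with $V=\tfrac14\E\bigl(U_1(\theta_0)U_1^\top(\theta_0)\bigr)$. What must be checked is that the two error terms are $o_P(n^{-1/2})$: the degenerate second-order term of the Hoeffding decomposition, whose variance is $O(n^{-2})$ times the squared $\L_2(W)$-norm of the kernel, and the truncation discrepancy between $\int_{|u|\le1/h}$ and $\int_\R$. Both are controlled by Assumption~A (finite moments of $W$ up to order three, which keep the relevant integrals bounded) together with the bandwidth condition $h=o(1)n^{-1/(4\beta)}$ and Assumption~B, in the same spirit as the mean-square bound of Proposition~\ref{convS_n}.

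For the Hessian I would establish the uniform convergence $\sup_{\theta\in\Theta}\|\ddot S_n(\theta)-\ddot S(\theta)\|\to_P 0$ by a uniform law of large numbers for $U$-statistics whose kernels are smooth in $\theta$ on the compact $\Theta$; the successive $\theta$-derivatives of $1/M(\theta,u)$ bring down polynomial factors in $u$, which is precisely why integrability up to $|u|^3$ (Assumption~A) and the lower bound $|M(\theta,u)|\ge 1-2P$ are needed to dominate the kernels. Since $\ddot S$ is continuous and, using once more $J(\theta_0,\cdot)=0$ to cancel the term in $J\ddot J$, one has $\ddot S(\theta_0)=-\tfrac12\int_\R\dot J(\theta_0,u)\dot J^\top(\theta_0,u)\,dW(u)=\mathcal I$, the consistency $\bar\theta_n\to_P\theta_0$ gives $\ddot S_n(\bar\theta_n)\to_P\mathcal I$. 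Combining this with the score CLT through Slutsky's lemma produces $\sqrt n\,(\hat\theta_n-\theta_0)\to_d N(0,\Sigma)$, with the sandwich covariance $\Sigma$ built from $\mathcal I^{-1}$ and $V$.

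The main obstacle I anticipate is the central limit theorem for the $U$-statistic score with the truncation level $1/h$ diverging with $n$: one must verify that the degenerate part of the Hoeffding decomposition and the truncation bias are both negligible at the $\sqrt n$ scale, which is exactly where the interplay between the moment Assumption~A, the Sobolev Assumption~B, and the rate $h=o(1)n^{-1/(4\beta)}$ becomes essential. A secondary point requiring care is the invertibility of $\mathcal I$, that is, that $\{\dot J(\theta_0,u)\}_u$ is not confined to a proper subspace; this follows from the identifiability underlying Theorem~\ref{theoImag}, but should be argued explicitly since it supports both the first-order expansion and the final covariance.
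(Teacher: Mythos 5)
Your proposal follows essentially the same route as the paper: the first-order condition and Taylor expansion of $\dot S_n$ around $\theta_0$, the key observation that $J(\theta_0,\cdot)\equiv 0$ centres the score and reduces the order-two $U$-statistic to its linear (H\'ajek) part plus a degenerate part and a truncation error, both negligible at scale $\sqrt n$ (this is exactly the paper's $A_n+B_n$ and $C_n+D_n$ splits), followed by convergence of the Hessian at the intermediate point to $\mathcal I$. The only minor differences are that the paper handles $\ddot S_n(\theta_n^*)$ via the Lipschitz bound of Lemma~\ref{LipschS_n} and pointwise convergence at $\theta_0$ rather than a uniform law of large numbers over $\Theta$, and your closing remark that the invertibility of $\mathcal I$ should be argued explicitly is a fair point that the paper itself leaves unaddressed.
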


The next theorem gives the upper bounds for the rate of convergence of the nonparametric estimator $f_n$ of $f$, at some fixed point $x$, over Sobolev classes of functions. The main message of the theorem is that, if $\beta >1/2$ then the nonparametric rates for density estimation are reached, provided a correct choice of parameters $h$ and $b_n$. This might seem surprising, but it is again related to the fact that the inverse problem under consideration is well posed and the estimation of the Euclidean  parameter $\theta_0$ does not affect the nonparametric rate for estimating $f$.

\begin{theorem}\label{nonparam}
Let the estimator $\hat \theta_n$ of $\theta$
be defined in (\ref{estimateur}) and $f_n(x)$ the estimator of $f(x)$
at some fixed point $x \in \R$ in (\ref{kernelest}), with $h = o(1) n^{-1/(4\beta)}$, $b_n=c n^{-(\beta-1/2)/(2\beta)}$ for some $c>0$ and a kernel $K$ in $\mathbb{L}_1$ and in $\mathbb{L}_2$ with Fourier transform $K^*$ having support included in $\{u: |u|\geq 1\}$.

If $\beta >1/2$,
$$
\limsup_{n\to \infty} \sup_{f\in W(\beta,L)} \sup_{\theta_0 \in \Theta}
E_{\theta_0,f}\left[ n^{-\frac{2\beta -1}{2\beta}} |f_n(x)-f(x)|^2\right]
\leq C,
$$
for some constant $C < \infty$ which depends on $\beta, \, L, \, P$ and on $\int K^2$.
\end{theorem}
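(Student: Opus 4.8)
The plan is to separate the cost of estimating the Euclidean parameter from the genuinely nonparametric error. Let $\bar f_n$ denote the \emph{oracle} version of (\ref{kernelest})--(\ref{kernelfour}) in which every leave-one-out estimate $\hat\theta_{n,-k}$ is replaced by the true $\theta_0$, so that $\bar f_n^*(u)=n^{-1}\sum_{k=1}^n K^*(b_nu)e^{iuX_k}/M(\theta_0,u)$, and write
\begin{equation*}
f_n(x)-f(x)=\bigl(f_n(x)-\bar f_n(x)\bigr)+\bigl(\bar f_n(x)-f(x)\bigr).
\end{equation*}
It then suffices to bound the second moment of each bracket by a constant multiple of $n^{-(2\beta-1)/(2\beta)}$, uniformly over $f\in W(\beta,L)$ and $\theta_0\in\Theta$.

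For the oracle term I would run a bias--variance analysis in the Fourier domain. Since $\E[e^{iuX_1}]=g^*(u)=M(\theta_0,u)f^*(u)$, the oracle is unbiased for $K^*(b_n\cdot)f^*$, so its bias at $x$ equals $(2\pi)^{-1}\int e^{-iux}(K^*(b_nu)-1)f^*(u)\,du$. Because $K^*$ is supported in $\{|u|\le1\}$ and equals $1$ at the origin, Cauchy--Schwarz together with Assumption B controls this by
\begin{equation*}
\frac1{2\pi}\Bigl(\int_{|u|>1/b_n}|f^*(u)|^2|u|^{2\beta}\,du\Bigr)^{1/2}\Bigl(\int_{|u|>1/b_n}|u|^{-2\beta}\,du\Bigr)^{1/2}\le C\,L^{1/2}\,b_n^{\beta-1/2},
\end{equation*}
the last integral being finite precisely because $\beta>1/2$. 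For the variance I would write $\bar f_n(x)=n^{-1}\sum_k T(X_k)$ with $T(y)=(2\pi)^{-1}\int e^{iu(y-x)}K^*(b_nu)/M(\theta_0,u)\,du$, and combine Plancherel with the lower bound $|M(\theta_0,u)|^2\ge(1-2P)^2$ to obtain $\mathrm{Var}(\bar f_n(x))\le n^{-1}\E|T(X_1)|^2\le C\,(1-2P)^{-2}\|g\|_\infty\|K^*\|_2^2\,(nb_n)^{-1}$; here $\|g\|_\infty<\infty$ uniformly over $W(\beta,L)$ by the Sobolev embedding available for $\beta>1/2$. Thus the oracle mean square error is at most $C\bigl(Lb_n^{2\beta-1}+(nb_n)^{-1}\bigr)$, and balancing the two contributions in $b_n$ produces the announced rate $n^{-(2\beta-1)/(2\beta)}$.

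The parameter-error term is the delicate part. I would Taylor expand
\begin{equation*}
\frac1{M(\hat\theta_{n,-k},u)}-\frac1{M(\theta_0,u)}=-\frac{\dot M(\theta_0,u)^\top(\hat\theta_{n,-k}-\theta_0)}{M(\theta_0,u)^2}+R_k(u),
\end{equation*}
so that, to first order, $f_n(x)-\bar f_n(x)=-n^{-1}\sum_k A_k^\top(\hat\theta_{n,-k}-\theta_0)$ plus a remainder, where $A_k=(2\pi)^{-1}\int e^{iu(X_k-x)}K^*(b_nu)\dot M(\theta_0,u)M(\theta_0,u)^{-2}\,du$. The structural fact I would exploit is that the leave-one-out estimate $\hat\theta_{n,-k}$ is independent of $X_k$, hence of $A_k$; therefore the expectation of each summand factorises as $\E[A_k]^\top\,\E[\hat\theta_{n,-k}-\theta_0]$ and the cross terms in the second moment split into products. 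Feeding in the $\sqrt n$-consistency $\hat\theta_{n,-k}-\theta_0=O_P(n^{-1/2})$ supplied by Theorem \ref{NA}, the lower bound on $|M|$, the estimate $|\E[A_k]|\le C\int_{|u|\le1/b_n}|f^*(u)|\,|\dot M(\theta_0,u)/M(\theta_0,u)|\,du$ (again bounded through Assumption B), and a Plancherel variance bound $\E|A_1|^2=O(b_n^{-3})$, I would show that both the linear term and the quadratic remainder (which carries $\|\hat\theta_{n,-k}-\theta_0\|^2=O_P(n^{-1})$) are of strictly smaller order than $n^{-(2\beta-1)/(2\beta)}$.

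The hard part will be making this last step quantitative without losing the rate. A naive bound that replaces $|e^{iu(X_k-x)}|$ by $1$ and pulls $\|\hat\theta_{n,-k}-\theta_0\|=O_P(n^{-1/2})$ out of the integral yields only $O_P(n^{-1/2}b_n^{-2})$, which \emph{exceeds} the target rate; the required gain must come from two cancellation mechanisms. First, the oscillatory average $n^{-1}\sum_k e^{iu(X_k-x)}$ concentrates at $g^*$, so the worst-case $b_n^{-2}$ is replaced by the far smaller $|\E[A_1]|$ plus a $\sqrt n$-smaller stochastic fluctuation. Second, the leave-one-out construction decouples $\hat\theta_{n,-k}$ from $X_k$, so the expectation of the linear term factors through the \emph{bias} of $\hat\theta_{n,-k}$ rather than through $\|\hat\theta_{n,-k}-\theta_0\|$ itself; controlling this bias via the M-estimator expansion behind Theorem \ref{NA} is what keeps the mean term below the nonparametric rate. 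Quantifying these two effects uniformly over $f\in W(\beta,L)$ and $\theta_0\in\Theta$, and checking that they suffice all the way down to $\beta>1/2$, is the crux of the argument; once it is done the theorem follows by combining the oracle bound with the negligible parameter-error bound.
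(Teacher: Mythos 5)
Your treatment of the oracle term $\bar f_n-f$ is correct and matches how the paper bounds the corresponding kernel bias ($O(b_n^{\beta-1/2})$ via Cauchy--Schwarz and $\beta>1/2$) and variance ($O((nb_n)^{-1})$). The genuine gap is the term $E|f_n(x)-\bar f_n(x)|^2$, which your oracle-centered decomposition creates and which you explicitly leave open (``the hard part will be making this last step quantitative''). Moreover, the one structural claim you make about it is wrong as written: the cross terms $E\bigl[A_k^\top(\hat\theta_{n,-k}-\theta_0)(\hat\theta_{n,-l}-\theta_0)^\top A_l\bigr]$ with $k\neq l$ do \emph{not} split into products of expectations, because although $A_k$ is independent of $\hat\theta_{n,-k}$, it is not independent of $\hat\theta_{n,-l}$, which is built from a sample containing $X_k$. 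A Cauchy--Schwarz bound on these $n^2$ cross terms, using your own estimate $E\|A_1\|^2=O(b_n^{-3})$ and $E\|\hat\theta_{n,-k}-\theta_0\|^2=O(n^{-1})$, gives a contribution of order $n^{-1}b_n^{-3}=n^{-1+3(2\beta-1)/(4\beta)}$, which exceeds the target $n^{-(2\beta-1)/(2\beta)}$ as soon as $\beta>5/6$. So the crux of your argument is both unproved and, along the route you sketch, genuinely delicate.

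The paper avoids this entirely by never centering at the oracle: it runs the bias--variance decomposition on $f_n$ itself. For the variance it uses only the deterministic bound $|M(\theta,u)|\geq 1-2P$, valid for \emph{every} $\theta$ in the compact $\Theta$ and hence for the random $\hat\theta_{n,-k}$; no Taylor expansion in $\theta$ and no comparison with $M(\theta_0,u)$ is ever needed, and $\mathrm{Var}(f_n(x))=O((nb_n)^{-1})$ follows directly. For the bias, the leave-one-out independence yields the exact identity $E[e^{iuX_k}/M(\hat\theta_{n,-k},u)]=g^*(u)\,E[M^{-1}(\hat\theta_{n,-1},u)]$, so the parameter error enters only through $E[M^{-1}(\hat\theta_{n,-1},u)]-M^{-1}(\theta_0,u)$, integrated against $|g^*|$; this contributes $O(n^{-1/2})$ to the bias, negligible against $b_n^{\beta-1/2}$. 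In other words, the leave-one-out construction is there to make the \emph{mean} of $f_n$ computable, not to decouple $\hat\theta$ from the data in $L^2$. To repair your proof you would either have to establish $E|f_n(x)-\bar f_n(x)|^2=o(n^{-(2\beta-1)/(2\beta)})$ including the dependent cross terms --- strictly harder than what the theorem needs --- or switch to the paper's direct decomposition, where the well-posedness $|M|\geq 1-2P$ does all the work on the stochastic side.
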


We can choose an arbitrary point $\theta \in \Theta$ and write
$$
\sup_{f\in W(\beta,L)} \sup_{\theta_0 \in \Theta}
E_{\theta_0,f}\left[ n^{-\frac{2\beta -1}{2\beta}} |f_n(x)-f(x)|^2\right]
\geq \sup_{f\in W(\beta,L)}
E_{\theta,f}\left[ n^{-\frac{2\beta -1}{2\beta}} |f_n(x)-f(x)|^2\right]
$$
The lower bounds are known in the case of density estimation from direct observations, see for example  results for more general Besov classes of functions in H\"ardle {\it et al.} (1998). They generalize easily to our case, with fixed $\theta$.

\section{Simulations}

We implement our method and study its behaviour on samples of size $n=100$. The mean behaviour of our estimator $\hat \theta_n$ of $\theta_0$ is calculated by replicating $M=100$ times the same experiment.
We considered that the underlying symmetric density is either Gaussian, Cauchy or Laplace. We give the mean value of the estimated parameter and its standard deviation in Tables 1, 3 and 4, respectively.  We also plot the nonparametric estimator of the underlying density as compared to the true, in Figure~1.

We see that smaller is $p$, smaller is the standard deviation of $\hat \beta_n$. This is indeed intuitively clear, as $1-p$ which is larger represents the fraction of data sampled from the second population or else the amount of information about the population which is located at $\beta$. 

We note that the previous estimation methods based on the distribution function require usually finite moments up to some order. These methods cannot deal with the Cauchy density that we consider here, see Table \ref{cauchy}. Indeed, our method is based on Fourier transform, which is fast decreasing in this case.
We also consider non smooth Laplace density (or double exponential), see Table \ref{laplace}. Its Fourier transform is slowly decreasing, but we chose the weight function $w(x) = e^{-|x|}$ in order to deal with this problem. Therefore, all integrals have relatively small support of integration and the computation is fast enough.

\begin{table}[hptb!]\label{gauss}
\begin{center}
\begin{tabular}{cccc}
$n$ & $(p_0,\alpha_0,\beta_0)$ & Empirical means & Standard deviations\\
\hline
100 & (0.05, -1, 2) & (0.0808, -1.0398, 2.0181) & (0.0477, 0.3038, 0.1354)\\
100 & (0.10, -1, 2) & (0.1205, -1.0433, 1.9990) & (0.0478, 0.2829, 0.1569)\\
100 & (0.15, -1, 2) & (0.1609, -0.9874, 2.0093) & (0.0406, 0.2964, 0.1455)\\
100 & (0.25, -1, 2) & (0.2389, -0.9848, 1.9458) & (0.0407, 0.2936, 0.2059)\\
100 & (0.35, -1, 2) & (0.3338, -1.0049, 1.9278) & (0.0439, 0.3151, 0.2200)\\
100 & (0.45, -1, 2) & (0.4194, -0.9836, 1.9683) & (0.0362, 0.2996, 0.2727)\\
\hline
\end{tabular}
\end{center}
\caption{\label{gauss} Empirical means and standard deviations (from $M=100$ samples of size $n$) of the estimator $\hat \theta_n = (\hat p_n, \hat \alpha_n, \hat \beta_n)$ of  $\theta_0=(p_0,\alpha_0,\beta_0)$ when $f$ is standard Gaussian.}
\end{table}
In the Table \ref{nonsym}  we propose to illustrate the sensitivity of our method with respect to the symmetry assumption by considering a symmetric case against  various  shapeless mixed distributions close to the symmetric case.
\begin{table}[hptb!]
\begin{center}
\begin{tabular}{cccc}
$n$ & $\lambda$ & Empirical means & Standard deviations\\
\hline
100 & 0.5 & (0.2302, -1.0153, 1.9420) & (0.0390, 0.2949, 0.2627)\\
100 & 0.55 & (0.2299, -1.0206, 1.9639) & (0.0418, 0.3319, 0.2693)\\
100 & 0.6 & (0.2330, -0.9703, 1.9637) & (0.0402, 0.3134, 0.2808)\\
100 & 0.65& (0.2289, -0.9938, 2.0434) & (0.0399, 0.2572, 0.2744)\\
\hline
\end{tabular}
\end{center}
\caption{\label{nonsym} Empirical means and standard deviations (from $M=100$ samples of size $n$) of the estimator $\hat \theta_n = (\hat p_n, \hat \alpha_n, \hat \beta_n)$ of  $\theta_0=(0.25,-1,2)$ when $f$ is  the pdf  of a mixture distribution $\lambda {\mathcal N} (0.5, \sqrt{2}) + (1 -  \lambda ) {\mathcal N} (-0.5 \lambda/ (1-\lambda ), \sqrt{2})$, obtained by considering $\lambda  = 0.5, 0.55, 0.6, 0.65$.}
\end{table}

\begin{table}[hptb!]
\begin{center}
\begin{tabular}{cccc}
$n$ & $(p_0,\alpha_0,\beta_0)$ & Empirical means & Standard deviations\\
\hline
100 & (0.2, 1, 5) & (0.1987, 0.9888, 5.0116) & (0.0620, 0.3127, 0.2199)\\
100 & (0.2, 1, 2) & (0.1915, 1.1103, 1.9728) & (0.0580, 0.2374, 0.2630)\\
100 & (0.2, 1, 1.5) & (0.2068, 1.0815, 1.5358) & (0.0588, 0.2267, 0.2219)\\
100 & (0.2, 1, 1.2) & (0.2092, 1.0890, 1.1871) & (0.0626, 0.2398, 0.2452)\\
\hline
\end{tabular}
\end{center}
\caption{\label{cauchy} Empirical means and standard deviations (from $M=100$ samples of size $n$) of the estimator $\hat \theta_n = (\hat p_n, \hat \alpha_n, \hat \beta_n)$ of  $\theta_0=(p_0,\alpha_0,\beta_0)$ when $f$ is standard Cauchy.}
\end{table}

\begin{table}[hptb!]
\begin{center}
\begin{tabular}{cccc}
$n$ & $(p_0,\alpha_0,\beta_0)$ & Empirical means & Standard deviations\\
\hline
100 & (0.05, -1, 2) & (0.0520, -0.9768, 2.0034) & (0.0280, 0.4276, 0.1704)\\
100 & (0.15, -1, 2) & (0.1518, -0.9765, 1.9769) & (0.0317, 0.4109, 0.1802)\\
100 & (0.25, -1, 2) & (0.2447, -1.0103, 1.9886) & (0.0290, 0.4423, 0.2056)\\
100 & (0.35, -1, 2) & (0.3432, -0.9602, 1.9407) & (0.0297, 0.4014, 0.2344)\\
100 & (0.45, -1, 2) & (0.4300, -0.9710, 1.9547) & (0.0315, 0.4114, 0.3158)\\
\hline
\end{tabular}
\end{center}
\caption{\label{laplace} Empirical means and standard deviations (from $M=100$ samples of size $n$) of the estimator $\hat \theta_n = (\hat p_n, \hat \alpha_n, \hat \beta_n)$ of  $\theta_0=(p_0,\alpha_0,\beta_0)$ when $f$ is Laplace.}
\end{table}

\begin{figure}[hptb!]
\includegraphics[width=4.5cm,height=4cm]{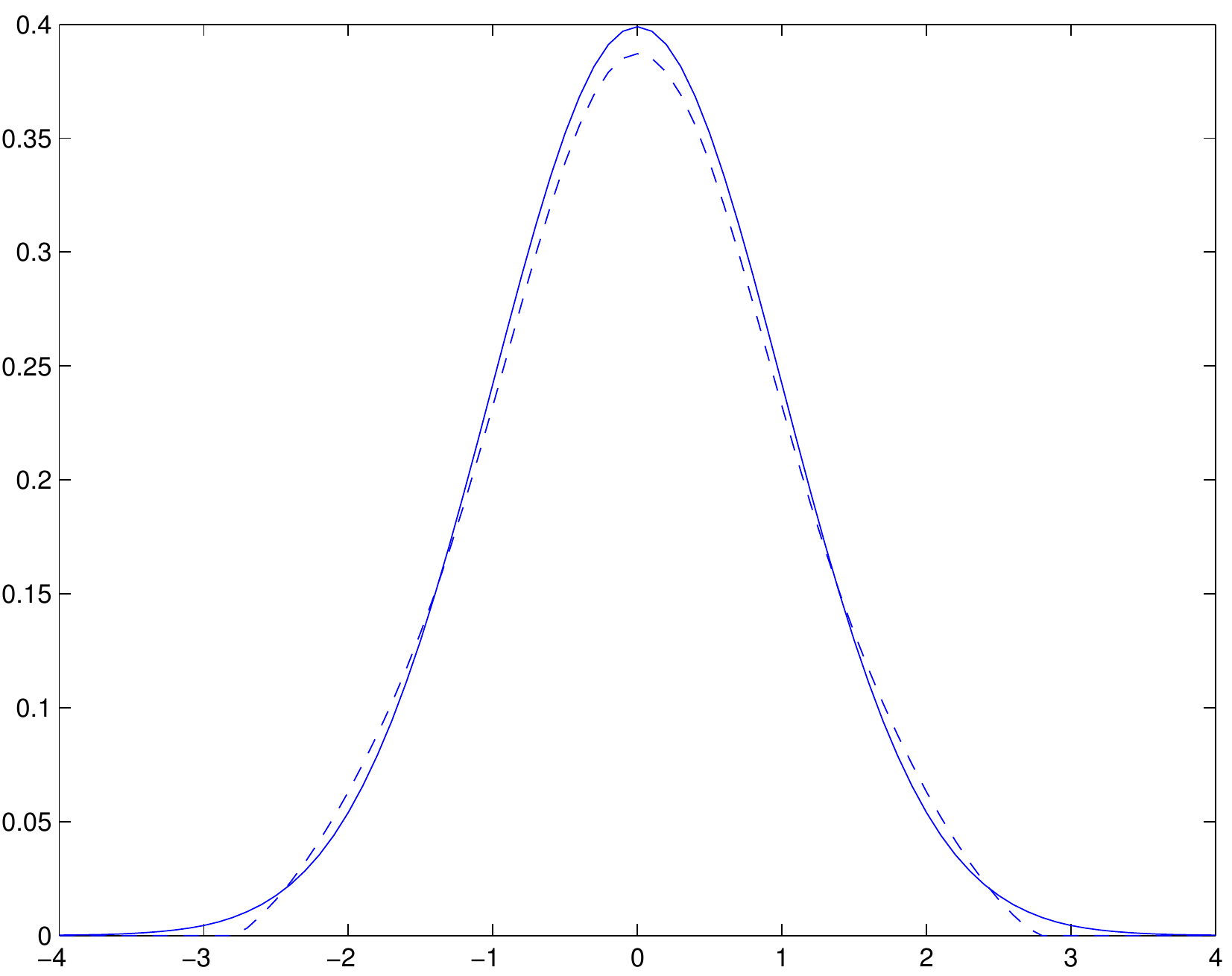}
\includegraphics[width=4.5cm,height=4cm]{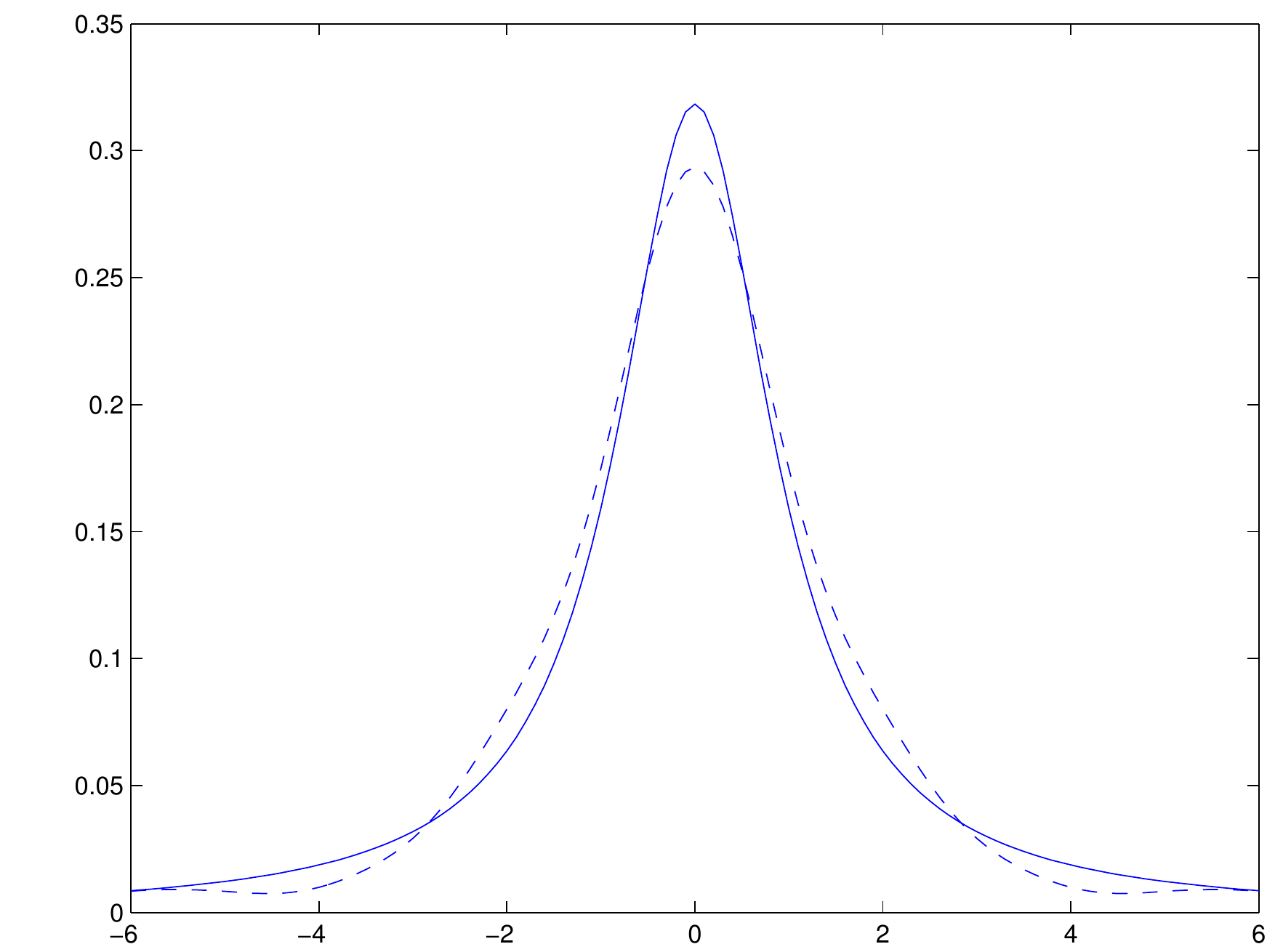}
\includegraphics[width=4.5cm,height=4cm]{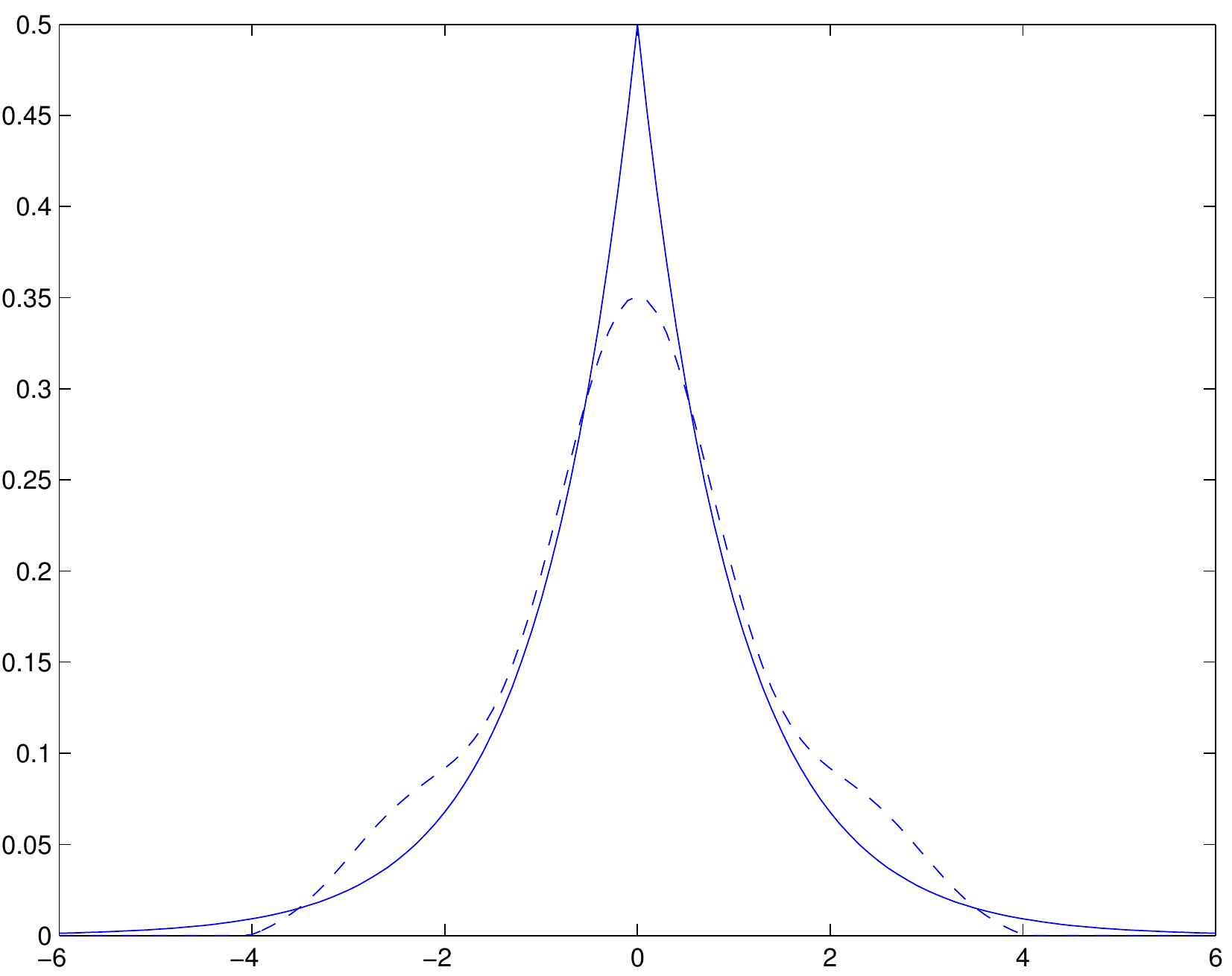}
\caption{\label{figure1} Underlying density (solid line) and kernel estimator (dashed line) for a) Gauss density, b) Cauchy density and c) Laplace density.}
\end{figure}
\noindent {\it Comments on Table 1-4}.  Comparing the rows 3 and 5 of Table \ref{gauss}  with the rows 2 and 5  of Table 2 in  BMV, it appears that our  estimator is clearly  less unstable than the estimator proposed by these authors when $f$ is the ${\mathcal N}(0,1)$ pdf. Table 2  summarizes the performance of our method in slightly shapeless  situation where $f$ is the pdf of  the  $\lambda {\mathcal N} (0.5, \sqrt{2}) + (1 -  \lambda ) {\mathcal N} (-0.5 \lambda/ (1-\lambda ), \sqrt{2})$ distribution satisfying  $\int_\R xf(x)dx=0$ and $\int_\R x^2 f(x)dx=1$, for all $\lambda \in (0,1)$. When $\lambda=0.5$ ($f$ is a symmetric bimodal pdf with mean 0 and variance equal to 1)  it is then interesting to compare the performance of our method, see  row 1 of  Table \ref{nonsym}, with its performances in the similar Gaussian case, see row 4 in Table \ref{gauss}, the noticeable fact being that the variance of $\hat\beta_n$ is smaller in the Gaussian case. When $\lambda=0.55,0.6,0.65$  the  bias of $\hat p_n$ is  badly affected  when the standard deviations of the estimators is  stable. The results provided in Table 3 seems to show that  the heavy tails of the Cauchy distribution have essentially a bad influence on the standard deviation of $\hat p_n$. Comparing Table 1 and Table 4 it appears that the peak   on the graph of the Laplace pdf helps to estimate  the parameter $p_0$  but do not work in favor of the other parameters. \\

\noindent{\it Rainfall dataset.} In this paragraph we propose to study the performances of our method when compared to the results obtained
in BMV.   We have implemented the Gauss kernel estimator with bandwidth $b_n =2 n^{-1/4}$, $n=70$,  and used in  (\ref{kernelfour}), instead of $\hat \theta_{n,-k}$,   the estimator $\hat \theta_n$. When $K$ is the Gauss kernel, we explicitly have
\begin{eqnarray*}
  f_n(x) = \frac 1{n} \sum_{k=1}^n \int_\R Q(b_n,\hat \theta_n;u)[\hat p_n \cos(u(X_k-x-\hat \alpha_n))+(1-\hat p_n) \cos (u(X_k-x-\hat \beta_n))]du,
\end{eqnarray*}
where $$Q(\theta,b;u):=\frac{1}{2\pi}\times \frac{e^{-b^2 u^2/2}}{ 2p^2-2p+1+2p(1-p)\cos(u(\alpha-\beta))}.$$
The results provided by our method are $\hat p_n=0.15$, $\hat \alpha_n=12.7$, $\hat \beta_n=38.5$ and the behavior of the functional estimators
is summarized in Figure \ref{figure2}. Before commenting  the good performances of our  estimator  $(\hat \theta_n,\tilde f_n)$  in Figure \ref{figure2}, it is crucial to notice that the reconstruction of the pdf $g$
by $g_{\hat \theta_n, f_n} (\cdot)=\hat p_n  f_n(\cdot-\hat \alpha_n)+(1-\hat p_n)  f_n(\cdot-\hat \beta_n)$ coincides  with $g_n$ itself, according to (\ref{kernelfour}-\ref{tildf}) and replacing $\hat\theta_{n,-k}$ by $\hat \theta_n$.
This basic phenomenon is illustrated in Figure \ref{figure3}.
\begin{figure}[hptb!]
\center
\includegraphics[width=8cm,height=6cm]{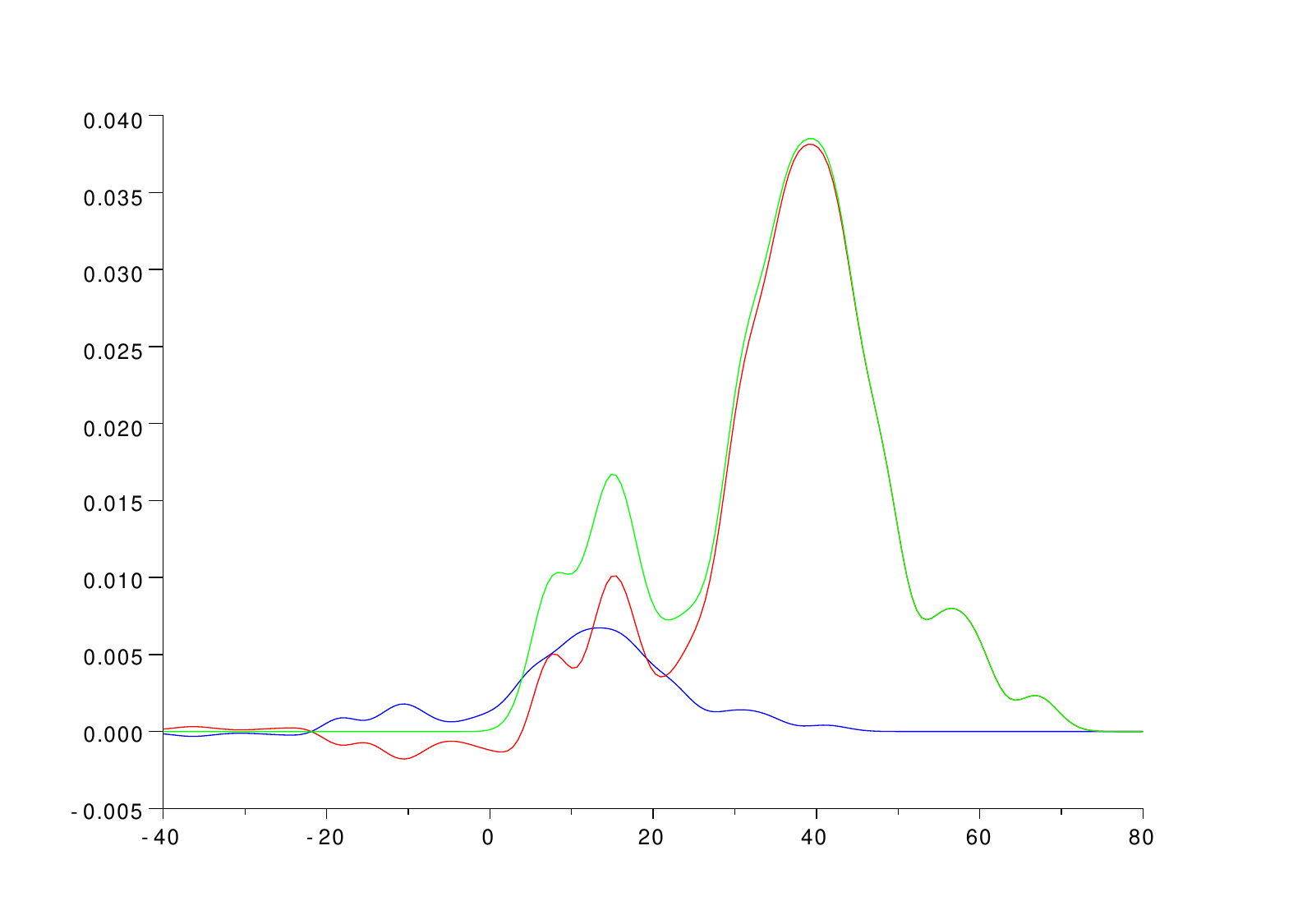}
\caption{\label{figure3} Rainfall dataset.  In blue the graph of $\hat p_n f_n(\cdot-\hat \alpha_n)$, in red the graph of $(1-\hat p_n)  f_n(\cdot-\hat \beta_n)$, in green the graph of 
$ g_{\hat \theta_n, f_n} (\cdot)=\hat p_n  f_n(\cdot-\hat \alpha_n)+(1-\hat p_n)  f_n(\cdot-\hat \beta_n)=g_n$ obtained with  $h_n=2.5$.}
\end{figure}
As mentioned in Section \ref{nonp},
the function $f_n$ is not necessarily a pdf due to its negative part (coming from the small size of $n$ and the fact  that model (\ref{model}) is not necessarily the true underlying model),
hence it is needed to regularize $f_n$ into  $\tilde f_n$  which leads to consider,  on this real dataset,  $\tilde f_n =0.9644\times f_n \mathbb{I}_{f_n\geq 0}$. This modification explains the fact the graph of $g_{\hat \theta_n,\tilde f_n}=\hat p_n \tilde  f_n(\cdot-\hat \alpha_n)+(1-\hat p_n) \tilde  f_n(\cdot-\hat \beta_n)$ does not match exactly the graph of  $g_{\hat \theta_n, f_n}=g_n$.
\begin{figure}[hptb!]
\center
\includegraphics[width=7cm,height=6cm]{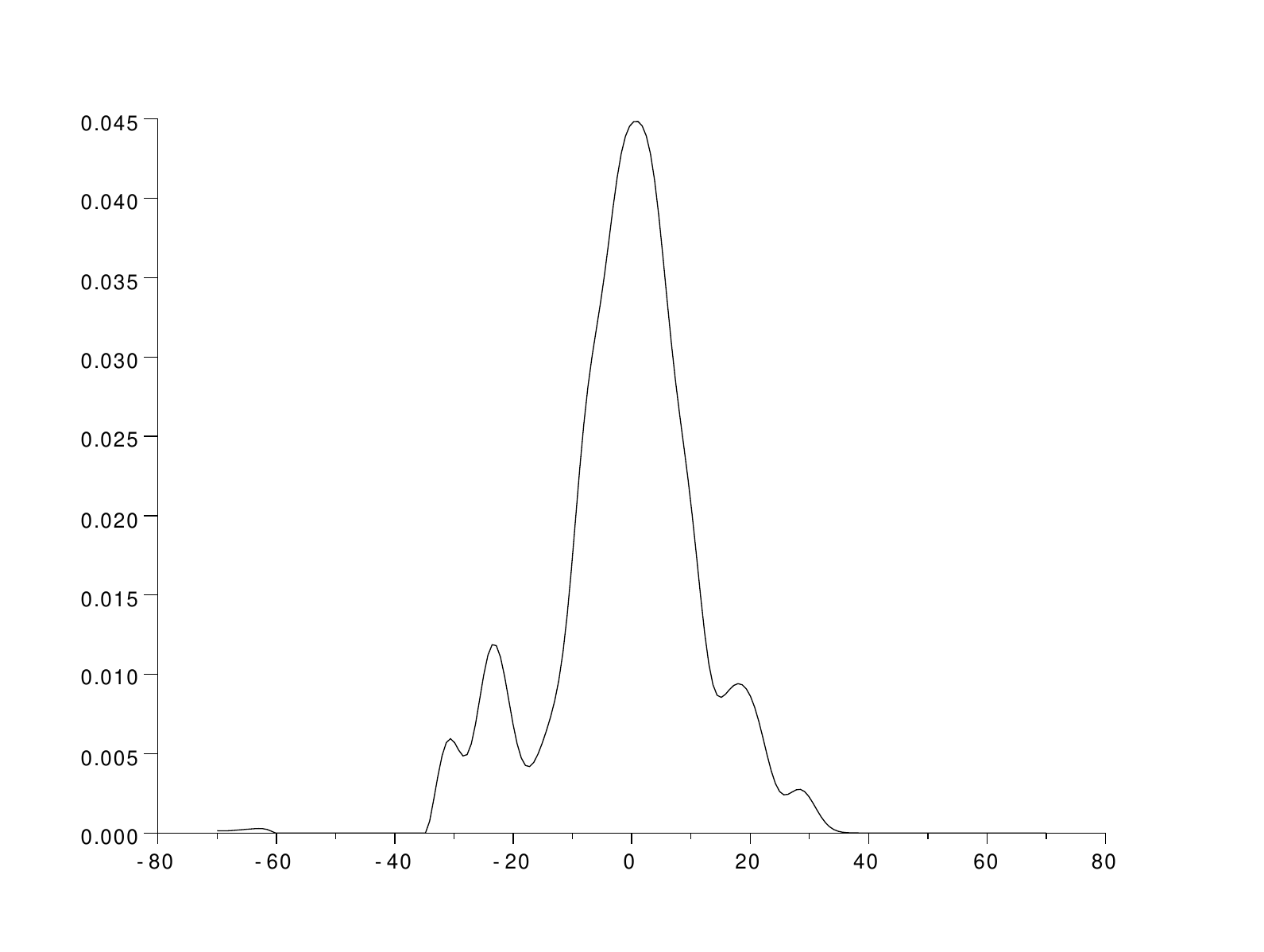}
\quad
\includegraphics[width=7cm,height=6cm]{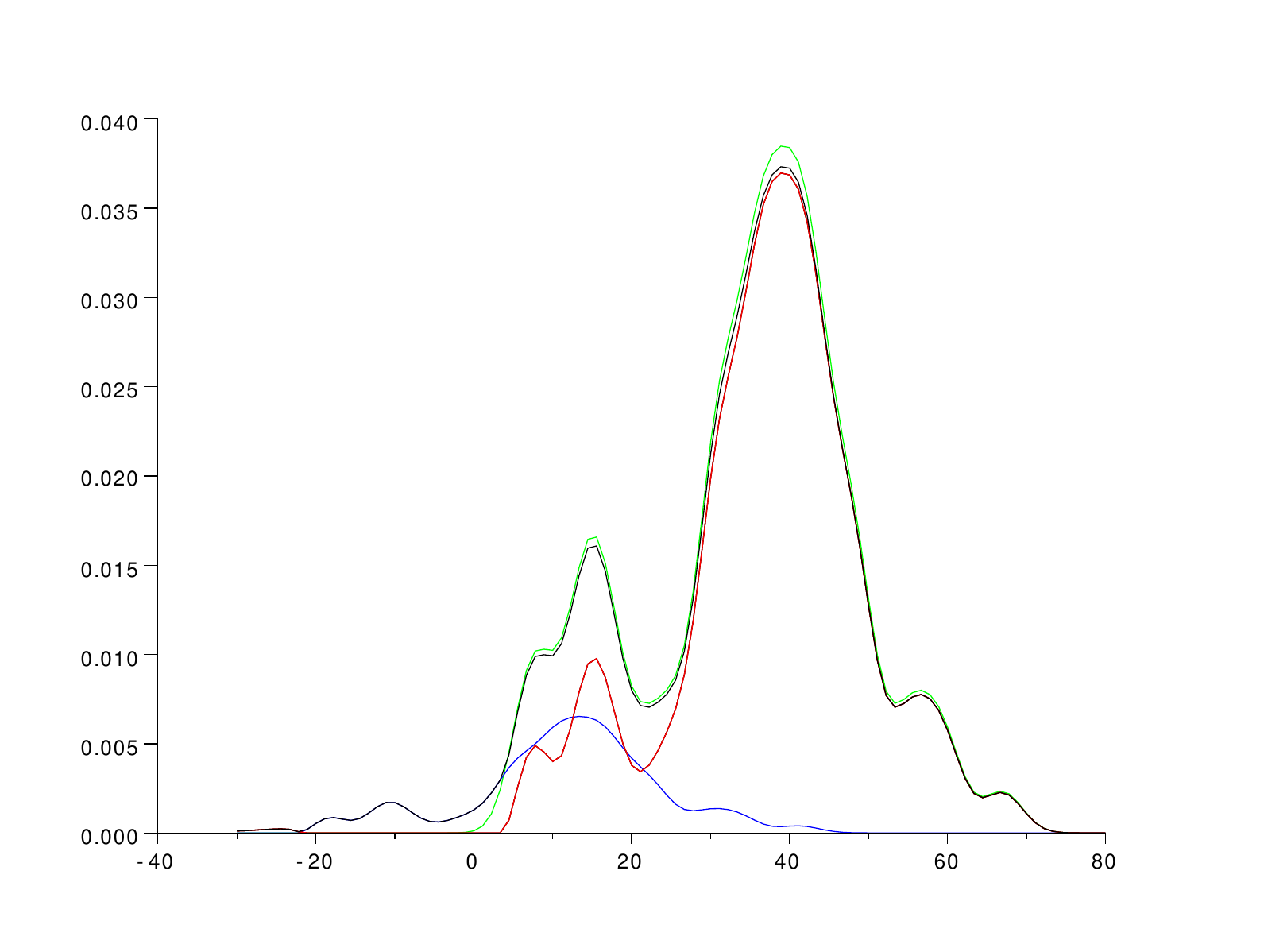}
\caption{\label{figure2} Rainfall dataset. a) Graph of $\tilde f_n$ $f$; b) In blue the graph of $\hat p_n \tilde f_n(\cdot-\hat \alpha_n)$, in red the graph of $(1-\hat p_n) \tilde f_n(\cdot-\hat \beta_n)$, in black the graph of 
$ g_{\hat \theta_n,\tilde f_n} (\cdot)=\hat p_n \tilde f_n(\cdot-\hat \alpha_n)+(1-\hat p_n) \tilde f_n(\cdot-\hat \beta_n)$, in  green  the graph of $ g_n$ obtained with  $h_n=2.5$. }
\end{figure}
Actually we observe that the graph of  $  g_{\hat \theta_n,\tilde f_n} (\cdot)$   
fits  almost perfectly the graph of $\hat g_n$ in the interval $[0,80]$, 
when it generates an extra  bump in the interval [-20,0].
Nethertheless when comparing our graphs to  the graphs obtained in  BMV (including a comparison with the two-component Gaussian  mixture model), we observe that we both  have  the extra bump issue on the intervall [-20,0],  on the other hand we better estimate  the two first bumps appearing on the graph of $g_n$ within the interval $[0,20]$.
We think that our methodological approach performs  better than the existing one, mainly  because we do not symmetrize our functional estimator
$\tilde f_n$ in order to mimic as much as possible the shape of $f_n$  (which shapeless is  precisely the reason why $g_{\hat \theta_n,\tilde f_n}=g_n$, see Figure \ref{figure3}).





\section{Auxiliary results and Proofs}

Let us use the notation $\|v\|$ for the Euclidean norm of a vector $v \in \mathbb{R}^d$
and $\|A\|^2_2 = tr(A^\top A)$ for any matrix $A$ in $\mathbb{R}^{d \times d}$.

\begin{lemma}\label{bornes}
\begin{enumerate}
\item For all $u\in \R$, we have
$$
\max\{\sup_{\theta \in \Theta} |Z_k(\theta,u)|, \sup_{\theta \in \Theta} |J(\theta,u)|\} \leq \frac 2{1-2P},
$$
for any $k$ from 1 to $n$.
\item For all $u\in \R$, we have
$$
\max\{ \sup_{\theta \in \Theta}\|\dot Z_k(\theta,u)\|,\sup_{\theta \in \Theta} \|\dot J(\theta,u)\|\} \leq \frac {4(1+|u|)}{(1-2P)^2},
$$
for any $k$ from 1 to $n$.

\item For all $u\in \R$, we have
$$
\|\ddot Z_k(\theta,u)\|_2 \leq \frac {C(1+|u|+u^2)}{(1-2P)^3},
$$
for some absolute constant $C>0$, for any $\theta \in \Theta$ and for any $k$ from 1 to $n$.

\end{enumerate}
\end{lemma}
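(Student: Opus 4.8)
The plan is to derive everything from two facts already in hand: the two-sided bound $1-2P \le |M(\theta,u)| \le 1$ valid on $\mathbb{R}\times\Theta$, and the trivial estimate $|g^*(\pm u)| \le \int g = 1$ (likewise $|e^{\pm iuX_k}| = 1$). Part~1 is then immediate: both $Z_k(\theta,u)$ and $J(\theta,u)$ are differences of two fractions whose numerators have modulus at most $1$ and whose denominators have modulus at least $1-2P$, so each is bounded by $1/(1-2P)+1/(1-2P) = 2/(1-2P)$, uniformly in $\theta$ and $u$.

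For Part~2 I would first record the three partial derivatives of $M$ with respect to $\theta=(p,\alpha,\beta)$, namely $\partial_p M = e^{iu\alpha}-e^{iu\beta}$, $\partial_\alpha M = iu\,p\,e^{iu\alpha}$ and $\partial_\beta M = iu(1-p)e^{iu\beta}$, whose moduli are bounded by $2$, $|u|$ and $|u|$ respectively (using $p\le P<1/2$ and $1-p\le 1$). Applying the quotient rule, each component of $\dot Z_k$ equals $-e^{iuX_k}\partial_\bullet M(\theta,u)/M(\theta,u)^2$ plus the mirror term in $-u$; hence $|\partial_p Z_k|\le 4/(1-2P)^2$ and $|\partial_\alpha Z_k|, |\partial_\beta Z_k| \le 2|u|/(1-2P)^2$. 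Summing the squares gives $\|\dot Z_k\|^2 \le (16+8u^2)/(1-2P)^4 \le 16(1+|u|)^2/(1-2P)^4$, which is exactly the claimed bound $4(1+|u|)/(1-2P)^2$. The identical computation with $g^*(\pm u)$ in place of $e^{\pm iuX_k}$ yields the same bound for $\dot J$, since $|g^*|\le 1$.

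Part~3 is the only laborious step and is where the bookkeeping concentrates. Differentiating once more, every second partial of $e^{iuX_k}/M$ has the form $-e^{iuX_k}\bigl[\partial_\mu\partial_\vartheta M/M^2 - 2\,\partial_\vartheta M\,\partial_\mu M/M^3\bigr]$, with the $-u$ mirror term added. I would list the second partials of $M$ ($\partial_p^2 M=0$, $\partial_\alpha^2 M=-p\,u^2 e^{iu\alpha}$, $\partial_\beta^2 M=-(1-p)u^2 e^{iu\beta}$, $\partial_p\partial_\alpha M = iu\,e^{iu\alpha}$, $\partial_p\partial_\beta M=-iu\,e^{iu\beta}$, $\partial_\alpha\partial_\beta M=0$), all of modulus at most $u^2$, and reuse the first-partial bounds from Part~2. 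The clean way to organise the denominators is to write $1/M^2 = M/M^3$ so that every term carries $|M|^3 \ge (1-2P)^3$ in the denominator; then each entry of the Hessian is bounded by a constant multiple of $(1+|u|+u^2)/(1-2P)^3$, and since $\|\ddot Z_k\|_2$ is the Frobenius norm of a $3\times 3$ matrix the nine entries only change the absolute constant $C$.

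The main (and only) obstacle I anticipate is the mixed second-order term $\partial_\vartheta M\,\partial_\mu M$: one must check that over all index pairs its modulus is dominated by the envelope $1+|u|+u^2$ (the pair $(\alpha,\beta)$ produces $u^2$, the pairs involving $p$ produce $|u|$ or a constant), and that using $1-2P\le 1$ lets every lower power of $(1-2P)^{-1}$ be absorbed into $(1-2P)^{-3}$. Once this is verified the three bounds follow with no further analysis, uniformly in $\theta\in\Theta$, $u\in\mathbb{R}$ and $1\le k\le n$.
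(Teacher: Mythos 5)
Your proposal is correct and follows essentially the same route as the paper: bound the numerators by $1$, use $1-2P\le|M(\theta,u)|\le 1$ for the denominators, differentiate $M$ explicitly in $(p,\alpha,\beta)$, and apply the quotient rule term by term (your Part~3 is in fact more detailed than the paper's, which only records the formula for $\ddot Z_k$ in terms of $\ddot M/M^2$ and $\dot M\dot M^\top/M^3$ and "deduces the bound"). The only nitpick is the phrase "all of modulus at most $u^2$" for the second partials of $M$ -- the mixed partials in $p$ have modulus $|u|$, which exceeds $u^2$ for $|u|<1$ -- but your envelope $1+|u|+u^2$ absorbs this, so nothing breaks.
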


\begin{proof}
1. It is easy to see that $|Z_j(\theta,u)| \leq 2 /|M(\theta,u)| \leq 2/(1-2P)$ and that
$$
|J(\theta,u)| \leq 2 \left| \frac{g^{\ast }(u)}{M(\theta,u)}\right|\leq \frac 2{(1-2P)}.
$$

2. We note that
$$
\dot Z_k(\theta,u)
= -\frac{e^{iuX_k}}{M^2(\theta,u)} \left(\begin{array}{c}
e^{iu\alpha }-e^{iu\beta}\\ iupe^{iu\alpha}\\ iu(1-p) e^{iu \beta}
\end{array} \right)
+\frac{e^{-iuX_k}}{M^2(\theta,-u)} \left( \begin{array}{c}
e^{-iu\alpha }-e^{-iu\beta}\\ -iupe^{-iu\alpha}\\ -iu(1-p) e^{-iu \beta}
\end{array}\right),
$$
and that
$$
E[\dot Z_k(\theta,u)] = \dot J(\theta,u)
= -\frac{g^*(u)}{M^2(\theta,u)} \left(\begin{array}{c}
e^{iu\alpha }-e^{iu\beta}\\ iupe^{iu\alpha}\\ iu(1-p) e^{iu \beta}
\end{array} \right)
+\frac{g^*(-u)}{M^2(\theta,-u)} \left( \begin{array}{c}
e^{-iu\alpha }-e^{-iu\beta}\\ -iupe^{-iu\alpha}\\ -iu(1-p) e^{-iu \beta}
\end{array}\right).
$$
We have
\begin{eqnarray*}
\| \dot J(\theta,u)\| &= & \| \frac{g^{\ast }(u)}{M^2(\theta,u)} \dot M(\theta,u)
+ \frac{g^{\ast }(-u)}{M^2(\theta,-u)} \dot M(\theta,-u)\|\\
& \leq & \frac 1{(1-2P)^2} \left(2 \left( 2^2+ p^2u^2 +(1-p)^2u^2 \right)\right)^{1/2}
\leq \frac{4(1+|u|)}{(1-2P)^2}
\end{eqnarray*}
and the same goes for $\dot Z_k(\theta,u)$.

3. We write briefly
\begin{eqnarray*}
\ddot Z_k(\theta, u) & = & -\frac{e^{iu X_k}}{M^2(\theta,u)} \ddot M(\theta,u)
+\frac{e^{-iu X_k}}{M^2(\theta,-u)} \ddot M(\theta,-u)\\
&& + 2\frac{e^{iu X_k}}{M^3(\theta,u)} \dot M(\theta,u)\cdot \dot M(\theta, u)^\top
-2\frac{e^{-iu X_k}}{M^3(\theta,-u)} \dot M(\theta,-u)\cdot \dot M(\theta,- u)^\top.
\end{eqnarray*}
We deduce our bound from above.
\end{proof}

\begin{lemma}\label{bornesLip}
\begin{enumerate}

\item For all $u\in \R$, we have
$$
\|\dot Z_k(\theta,u) - \dot Z_k(\theta',u)\| \leq \|\theta - \theta '\| \cdot \frac {C(1+ |u|+ u^2)}{(1-2P)^3},
$$
for any $\theta, \, \theta' \in \Theta$ and any $k$ from 1 to $n$.

\item For all $u\in \R$, we have
$$
\|\ddot Z_k(\theta,u) - \ddot Z_k(\theta',u)\|_2 \leq \|\theta - \theta '\| \cdot \frac {C(1+|u|+ u^2 + |u|^3)}{(1-2P)^4},
$$
for some absolute constant $C>0$, for any $\theta, \, \theta' \in \Theta$ and for any $k$ from 1 to $n$.

\end{enumerate}
\end{lemma}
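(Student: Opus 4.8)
The plan is to derive both Lipschitz estimates from the fundamental theorem of calculus along the segment joining $\theta$ and $\theta'$, thereby reducing each Lipschitz bound on an $m$-th order derivative of $Z_k$ to a uniform bound on its $(m+1)$-th order derivative. The key preliminary observation is that this segment never leaves the region where the denominators are controlled: writing $\theta_t = \theta' + t(\theta-\theta')$ for $t\in[0,1]$, the $p$-coordinate $p_t$ is a convex combination of $p$ and $p'$, hence stays in $[P_*,P]\subset(0,1/2)$, while the coordinates $\alpha_t,\beta_t$ are unrestricted. Since $|M(\theta_t,u)| \ge 1-2p_t \ge 1-2P$ for every $u$ (the reverse triangle inequality, valid because $p_t<1/2$) and $M$ never vanishes on this range, the maps $Z_k(\cdot,u)$, $\dot Z_k(\cdot,u)$ and $\ddot Z_k(\cdot,u)$ are smooth along the whole segment, even where it crosses the diagonal $\{\alpha=\beta\}$ (there $|M|=1$).

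For part 1, I would write
$$
\dot Z_k(\theta,u) - \dot Z_k(\theta',u) = \left(\int_0^1 \ddot Z_k(\theta_t,u)\, dt\right)(\theta-\theta'),
$$
take Euclidean norms, and use that the operator norm of a matrix is dominated by its Frobenius norm $\|\cdot\|_2$, so that
$$
\|\dot Z_k(\theta,u) - \dot Z_k(\theta',u)\| \le \|\theta-\theta'\| \sup_{t\in[0,1]} \|\ddot Z_k(\theta_t,u)\|_2 .
$$
The claimed inequality then follows at once from Lemma \ref{bornes}(3), which already gives $\|\ddot Z_k(\theta_t,u)\|_2 \le C(1+|u|+u^2)/(1-2P)^3$ uniformly along the segment.

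For part 2 the same scheme applies, but now I need a uniform bound on the third-order derivative $Z_k^{(3)}(\theta,u)$, which is not yet available and must be produced by differentiating the expression for $\ddot Z_k$ once more. The point is purely structural: every $\theta$-derivative of $1/M(\theta,\pm u)$ either raises the power of $1/M$ by one, contributing an extra factor $1/(1-2P)$, or acts on a factor $\dot M, \ddot M,\dots$; moreover each differentiation in $\alpha$ or $\beta$ brings down a factor $iu$ (modulus $|u|$), while differentiation in $p$ brings no power of $u$. Consequently the third derivative is a sum of terms of the form (product of derivatives of $M$)$/M^{j+1}$ with $j\le 3$, whose dominant term $\dot M^{\otimes 3}/M^4$ is bounded by $C(1+|u|)^3/(1-2P)^4 \le C(1+|u|+u^2+|u|^3)/(1-2P)^4$. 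Hence $\|Z_k^{(3)}(\theta,u)\| \le C(1+|u|+u^2+|u|^3)/(1-2P)^4$, and applying the fundamental theorem of calculus to $\ddot Z_k$ exactly as above yields the stated Lipschitz inequality.

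The only genuinely laborious step is the explicit computation and tensor-norm bookkeeping for $Z_k^{(3)}$; however, since the exponentials $e^{\pm i u X_k}$ have modulus one and are untouched by the $\theta$-derivatives, and since all derivatives of $M$ up to order three are bounded by the advertised powers of $u$, this is a routine (if tedious) verification following the same pattern as the proof of Lemma \ref{bornes}. I expect the main obstacle to be this bookkeeping rather than any conceptual difficulty; the essential inputs, namely the lower bound $|M|\ge 1-2P$ preserved along segments and the one-$iu$-per-$\alpha/\beta$-derivative rule, are already in hand.
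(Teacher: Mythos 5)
Your proposal is correct and follows essentially the same route as the paper, whose proof is the one-line remark that the result follows from a Taylor expansion together with bounds from, and analogous to, Lemma~\ref{bornes}. You simply spell this out in full — including the useful observation, which the paper leaves implicit, that the segment joining $\theta$ and $\theta'$ keeps $|M|\geq 1-2P$ even where it crosses the diagonal $\alpha=\beta$, so the fundamental-theorem-of-calculus argument is legitimate.
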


\begin{proof} The proof uses a Taylor expansion and bounds from and similar to the Lemma~\ref{bornes}.
\end{proof}


\begin{proof}[Proof of Proposition~\ref{convS_n}]
It is easy to see that $E[Z_k(\theta, u)] = J(\theta,u)$.
Therefore the estimation bias is
\begin{eqnarray*}
|E(S_n(\theta)) - S(\theta)|
&=& \frac 14
\int_{|u|>1/h} \left( \frac{g^{\ast }(u)}{M(\theta,u)}
- \frac{\bar{g^{\ast}}(u)}{\bar{M}(\theta,u)} \right)^2dW(u)\\
&\leq &  \int_{|u|>1/h} \left(Im \frac{g^{\ast }(u)}{M(\theta,u)} \right)^2 dW(u)\\
&\leq & \frac{1}{(1-2p)^2} \int_{|u|>1/h} |g^\ast (u)|^2 dW(u).
\end{eqnarray*}
If we assume $f \in S(\beta,L)$, for some $\beta >0$ and $L>0$,
then
\begin{eqnarray}\label{biascrit}
|E(S_n(\theta)) - S(\theta)| &\leq & \frac{h^{2\beta}L}{(1-2P)^2}, \quad\quad h\to 0.
\end{eqnarray}

We have for the variance
\begin{eqnarray*}
&&Var(S_n(\theta))\\
 &=& \frac 1{16}E\left[\left(\frac 1{n(n-1)}
\sum_{j \not= k, j,k=1}^n \int_{|u|\leq 1/h}(Z_j(\theta,u)Z_k(\theta,u) - J^2(\theta,u))dW(u) \right)^2\right].
\end{eqnarray*}
It decomposes in $Var(S_n(\theta))=\frac 1{16} (T_n+V_n)$, where
\begin{eqnarray*}
T_n&=& E\left[\left(\frac 1{n(n-1)} \sum_{j \not= k, j,k=1}^n
\int_{|u|\leq 1/h}(Z_j(\theta,u) - J(\theta,u))(Z_k(\theta,u)-J(\theta,u))dW(u)
\right)^2 \right]
\\
V_n & =&E\left[\left(\frac 2n \sum_{k=1}^n \int_{|u|\leq 1/h} (Z_k(\theta,u)-J(\theta,u)) J(\theta,u) dW(u)
\right)^2 \right]
\end{eqnarray*}
Indeed, random variables in the previous sums are uncorrelated. Let us study the asymptotic behavior
of these terms. On the one hand,
\begin{eqnarray*}
T_n &=& \frac 1{n(n-1)} E \left[\left(
\int_{|u|\leq 1/h}(Z_1(\theta,u) - J(\theta,u))(Z_2(\theta,u)-J(\theta,u))dW(u)
\right)^2 \right]\\
&\leq & \frac 1{n(n-1)} E \left[\left(
\int_{|u|\leq 1/h} Z_1(\theta,u) Z_2(\theta,u) dW(u) \right)^2 \right]
\leq \frac {16}{(1-2P)^4 n^2},
\end{eqnarray*}
since from Lemma \ref{bornes} we have  $|Z_k(\theta,u)| \leq 2(1-2P)^{-1}$.
In addition,
\begin{eqnarray*}
V_n &=& \frac 4n  E \left[\left(\int_{|u|\leq 1/h} Z_1(\theta,u) J(\theta,u) dW(u) \right)^2 \right]
- \frac 4n \left(\int_{|u|\leq 1/h} J^2(\theta,u) dW(u) \right)^2.
\end{eqnarray*}
It is obvious that $\int_{|u|\leq 1/h} J^2(\theta,u) dW(u) \to -4 S(\theta)$ as $h\to 0$. As for the
first term, we use that $|J(\theta,u)| \leq 2(1-2P)^{-1}$
For all $u\in \R$ and $\theta \in \Theta$ and we write
\begin{eqnarray*}
E \left[\left(\int_{|u|\leq 1/h} Z_1(\theta,u) J(\theta,u) dW(u) \right)^2 \right]
&\leq & \frac 4{(1-2P)^2}.
\end{eqnarray*}
\end{proof}

\begin{lemma}\label{LipschS_n}
\begin{enumerate}[i)]
\item The function $S$ is Lipschitz over $\Theta$.
\item The empirical contrast $S_n$ defined in (\ref{estimS}) is Lipschitz over $\Theta$.
\item The empirical contrast $S_n$ defined in (\ref{estimS}) is such that $\ddot S_n$
is Lipschitz over $\Theta$.
\end{enumerate}
\end{lemma}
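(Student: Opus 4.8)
The three parts share a single mechanism, so I would set up the common machinery first. Each of $S$, $S_n$, $\ddot S_n$ is, up to the factor $-1/4$, an integral against $dW$ of products built from $J$ or $Z_k$ and their $\theta$-derivatives. To get a Lipschitz bound I would write the relevant difference, telescope it into a sum of products in which exactly one factor is a difference evaluated at $\theta$ versus $\theta'$, estimate that difference factor by $\|\theta-\theta'\|$ times a polynomial in $|u|$ (using the mean-value inequality together with Lemma~\ref{bornes}, or directly Lemma~\ref{bornesLip}), bound the remaining factors by the uniform estimates of Lemma~\ref{bornes}, and finally pull $\|\theta-\theta'\|$ out of the integral, whose $u$-dependent part is integrable against $dW$ by Assumption~A. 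The constants so obtained depend only on $P$ and on $\int_\R(1+|u|+u^2+|u|^3)\,dW(u)$, hence are independent of $n$ and $h$.

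For part (i) I would factor $J^2(\theta,u)-J^2(\theta',u)=(J(\theta,u)-J(\theta',u))(J(\theta,u)+J(\theta',u))$. The sum is bounded by $4/(1-2P)$ by Lemma~\ref{bornes}(1), while the mean-value inequality and Lemma~\ref{bornes}(2) give $|J(\theta,u)-J(\theta',u)|\le \|\theta-\theta'\|\,4(1+|u|)/(1-2P)^2$. Integrating yields $|S(\theta)-S(\theta')|\le \|\theta-\theta'\|\cdot \tfrac{4}{(1-2P)^3}\int_\R(1+|u|)\,dW(u)$, finite by Assumption~A. Part (ii) is identical after writing $Z_k(\theta)Z_j(\theta)-Z_k(\theta')Z_j(\theta')=Z_k(\theta)(Z_j(\theta)-Z_j(\theta'))+(Z_k(\theta)-Z_k(\theta'))Z_j(\theta')$ and applying the same two bounds to each $Z$; since I bound the truncated integral $\int_{|u|\le 1/h}$ simply by $\int_\R$, the resulting constant is uniform in $n$ and $h$.

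The real work is in part (iii). First I would compute the Hessian: for the scalar product $Z_kZ_j$ of functions of $\theta\in\R^3$ the Leibniz rule gives the Hessian $Z_j\ddot Z_k+Z_k\ddot Z_j+\dot Z_k\dot Z_j^\top+\dot Z_j\dot Z_k^\top$, so that
$$
\ddot S_n(\theta)=\frac{-1}{4n(n-1)}\sum_{j\ne k}\int_{|u|\le 1/h}\big(Z_j\ddot Z_k+Z_k\ddot Z_j+\dot Z_k\dot Z_j^\top+\dot Z_j\dot Z_k^\top\big)\,dW(u).
$$
For $\ddot S_n(\theta)-\ddot S_n(\theta')$ I would telescope each of the four matrix-valued integrand terms separately, for instance $Z_j(\theta)\ddot Z_k(\theta)-Z_j(\theta')\ddot Z_k(\theta')=(Z_j(\theta)-Z_j(\theta'))\ddot Z_k(\theta)+Z_j(\theta')(\ddot Z_k(\theta)-\ddot Z_k(\theta'))$, and likewise for the $\dot Z\,\dot Z^\top$ terms. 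Bounding with Lemma~\ref{bornes}(1)--(3) and the increments of Lemma~\ref{bornesLip}, every contribution factors as $\|\theta-\theta'\|$ times a polynomial in $|u|$ divided by $(1-2P)^5$.

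The main obstacle is purely bookkeeping: tracking the degree of these polynomials. The heaviest terms are $Z_j(\theta')(\ddot Z_k(\theta)-\ddot Z_k(\theta'))$, where Lemma~\ref{bornesLip}(2) contributes $(1+|u|+u^2+|u|^3)$, and $(\dot Z_k(\theta)-\dot Z_k(\theta'))\dot Z_j(\theta)^\top$, where Lemma~\ref{bornesLip}(1) and Lemma~\ref{bornes}(2) combine to $(1+|u|+u^2)(1+|u|)$; both have degree three in $|u|$. Hence the integrand is dominated by $\|\theta-\theta'\|$ times a constant multiple of $(1+|u|+u^2+|u|^3)/(1-2P)^5$, and integrating against $dW$ is legitimate exactly because Assumption~A supplies a finite third absolute moment. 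This precise matching — two differentiations plus one Lipschitz increment produce degree three, which is the moment order assumed for $W$ — is the crux, and it delivers a finite Lipschitz constant for $\ddot S_n$ that is again independent of $n$ and $h$, completing the proof.
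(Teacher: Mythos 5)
Your proposal is correct and follows essentially the same route as the paper: in all three parts you control the increment of the integrand by combining the uniform bounds of Lemma~\ref{bornes} with the Lipschitz increments of Lemma~\ref{bornesLip} (or a mean-value inequality), and you invoke Assumption~A to integrate the resulting polynomial in $|u|$ against $dW$, obtaining constants independent of $n$ and $h$. The only cosmetic differences are that you telescope products instead of applying the mean value theorem to the product directly, and you write the Hessian in its unsymmetrized four-term form with prefactor $-1/4$ rather than the paper's two-term form with prefactor $-1/2$; these are equivalent.
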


\begin{proof}
i) According to the mean value theorem,  we write
\begin{eqnarray*}
S(\theta) - S(\theta') & = & -\frac 14 \int_{\R} [J^2(\theta,u)-J^2(\theta',u)] dW(u)\\
 &=& -\frac 14 \int_{\R} (\theta-\theta')^{\top} \cdot \dot J^2(\theta_u,u) dW(u)\\
 &=& -\frac 12  \int_{\R} (\theta-\theta')^{\top} \cdot \dot J(\theta_u,u) J(\theta_u,u) dW(u),
\end{eqnarray*}
where for all $u\in \R$, $\theta_u$ lies in the line segment with extremities $\theta$ and $\theta'$.
By Cauchy-Schwarz inequality,
$$
|S(\theta) - S(\theta') |\leq \frac 12 \|\theta-\theta' \| \cdot \int_{\R} \| \dot J(\theta_u,u) \| \cdot |J(\theta_u,u)|dW(u).
$$
By Lemma~\ref{bornes}, $|S(\theta) - S(\theta') | \leq 4 (1-2P)^{-3} \int (1+ |u|)dW(u)\cdot \|\theta-\theta' \| $.

ii) Very similarly,
\begin{eqnarray*}
S_n(\theta) - S_n(\theta') & = & -\frac 1{4n(n-1)}  \sum_{j \not= k, j,k=1}^n
\int_{|u|\leq 1/h} (\theta-\theta')^{\top} \cdot \nabla \left( Z_k(\theta,u) Z_j(\theta,u)
\right)|_{\theta=\theta_u} dW(u)\\
&=& -\frac 1{2n(n-1)}  \sum_{j \not= k, j,k=1}^n
\int_{|u|\leq 1/h} (\theta-\theta')^{\top} \cdot \dot Z_k(\theta_u,u)
Z_j(\theta_u,u)dW(u),
\end{eqnarray*}
where for all $u\in \R$, $\theta_u$ lies in the line segment with extremities $\theta$ and $\theta'$.
Therefore
$$
|S_n(\theta) - S_n(\theta') |\leq \frac 4{(1-2P)^{3}} \|\theta-\theta' \| \cdot \int_{\R} (1+|u|) dW(u).
$$
Indeed, by Lemma~\ref{bornes}, $Z_j$ and $\dot Z_k$ have the same upper bounds as $J$ and $\dot J$, respectively.

iii) We have
$$
\ddot S_n(\theta) = \frac{-1}{2n(n-1)} \sum_{k\ne j} \int_{|u|\leq 1/h}
\left[\ddot Z_k(\theta,u) Z_j(\theta,u) + \dot Z_k(\theta,u) \dot Z_j(\theta,u)^\top \right]
dW(u).
$$
We shall bound from above as follows
\begin{eqnarray*}
\| \ddot S_n(\theta,u)-\ddot S_n(\theta ',u)\|_2
& \leq & \frac 1{2n(n-1)} \sum_{k \ne j}
\left\{ \left \| \int_{|u|\leq 1/h}(\ddot Z_k(\theta,u) -\ddot Z_k(\theta',u)) Z_j(\theta,u) dW(u)\right \|_2 \right. \\
&& + \left\|\int_{|u|\leq 1/h} \ddot Z_k(\theta',u) (Z_j(\theta,u) - Z_j(\theta',u)) dW(u) \right\|_2 \\
&& + \left\| \int_{|u|\leq 1/h} \dot Z_k(\theta,u) (\dot Z_j(\theta,u)- \dot Z_j(\theta',u))^\top dW(u)\right\|_2 \\
&& \left. + \left\| \int_{|u|\leq 1/h}(\dot Z_k(\theta,u) - \dot Z_k(\theta ',u)) \dot Z_j(\theta',u)^\top dW(u)\right\|_2\right\}.
\end{eqnarray*}
For each term in the previous sum, we use Taylor expansion and Lemmas~\ref{bornes} and~\ref{bornesLip} to get
$$
\left\| \ddot S_n(\theta,u)-\ddot S_n(\theta ',u)\right\|_2
\leq \left\|\theta - \theta' \right\| \frac{C \int (1+|u|+u^2+|u|^3) dW(u)}{(1-2P)^5},
$$
for some constant $C>0$, which finishes the proof by our Assumption A.
\end{proof}

\begin{proof}[Proof of Theorem \ref{cons}]
Our method is based on a
consistency proof for miminum contrast estimators by
Dacunha-Castelle and Duflo (1993,  p.94--96). Let us consider a
countable dense set $D$ in $\Theta$, then $\inf_{\theta\in \Theta}
S_n(\theta)=\inf_{\theta\in D}S_n(\theta) $, is a
measurable random variable. We define in addition the random
variable
$$
W(n,\xi)=\sup\left\{|S_n(\theta)-S_n(\theta')|;~(\theta,\theta')\in D^2,~ \|\theta-\theta' \|\leq \xi\right\},
$$
and recall that $S(\theta_0)=0$. Let us consider a non-empty  open
ball $B_0$ centered on $\theta_0$ such that $S$ is bounded from
below by a positive real number $2\varepsilon$ on $\Theta\backslash
B_0$. Let us  consider  us consider a sequence  $(\xi_p)_{p\geq 1}$ decreasing to zero,
and take $p$ such that there exists a covering of $\Theta\backslash
B_0$ by a finite number $\ell$ of balls $(B_i)_{1\leq i\leq \ell}$
with  centers $\theta_i\in \Theta$, $i=1,\dots,\ell$,   and  radius
less than $\xi_p$. Then, for all $\theta\in B_i$, we have
\begin{eqnarray*}
S_n(\theta)&\geq& S_n(\theta_i)-|S_n(\theta)-S_n(\theta_i)|\\
&\geq& S_n(\theta_i)-\sup_{\theta \in B_i}|S_n(\theta)-S_n(\theta_i)|,
\end{eqnarray*}
which leads to
\begin{eqnarray*}
\inf_{\theta \in \Theta\setminus B_0}S_n(\theta)\geq
\inf_{1\leq i\leq \ell}S_n(\theta_i) -W(n,\xi_p).
\end{eqnarray*}
As a  consequence we have the following events inclusions
\begin{eqnarray*}
\left\{\hat \theta_n\notin B_0\right\}
&\subseteq& \left\{\inf_{\theta\in  \Theta\setminus B_0}
S_n(\theta) < S_n(\theta_0)
\right\}\\
&\subseteq& \left\{\inf_{1\leq i\leq \ell}  S_n(\theta_i)-W(n,\xi_p)< S_n(\theta_0)\right\}\\
&\subseteq&
\left\{W(n,\xi_p)>\varepsilon\right\}\cup\left\{\inf_{1\leq i\leq
\ell}  S_n(\theta_i)-S_n(\theta_0))\leq
\varepsilon\right\}.
\end{eqnarray*}
Thus we have
\begin{eqnarray}\label{limsup1}
\left\{\hat \theta_n\notin B_0\right\}\subseteq
\left\{W(n,\xi_p)>\varepsilon\right\}\cup  \left\{\inf_{1\leq i\leq \ell} (S_n(\theta_i)-S_n(\theta_0))\leq \varepsilon\right\}.
\end{eqnarray}
By the convergence given in Proposition \ref{convS_n} we have
\begin{eqnarray*}\label{limsup2}
&&P\left( \inf_{1\leq i\leq \ell} (S_n(\theta_i)-S_n(\theta_0))\leq \varepsilon\right)\\
&&\leq 1-\prod_{i=1}^\ell (1-P(S_n(\theta_i)-S(\theta_0)\leq  \varepsilon)))\\
&&\leq 1-\prod_{i=1}^\ell (1-P(S_n(\theta_i)-S(\theta_i)+S_n(\theta_0)-S(\theta_0)\leq  \varepsilon-(S(\theta_i)-S(\theta_0)))\\
&&\leq 1-\prod_{i=1}^\ell (1-P(S_n(\theta_i)-S(\theta_i)+S_n(\theta_0)-S(\theta_0)\leq  -\varepsilon)))\\
&&\leq 1-\prod_{i=1}^\ell (1-P(|S_n(\theta_i)-S(\theta_i)|+|S_n(\theta_0)-S(\theta_0)|\geq  \varepsilon)))\\
&& \leq 1-\prod_{i=1}^\ell (1-[P(|S_n(\theta_i)-S(\theta_i)|\geq  \varepsilon))+P(|S_n(\theta_0)-S(\theta_0)|\geq  \varepsilon)])
\end{eqnarray*}
where the last term in the right hand side  of the above inequality vanishes to zero according to Proposition \ref{convS_n}.
Because $S_n$ is  Lipschitz over $\Theta$ by Lemma \ref{LipschS_n}, we have that for sufficiently large $p$,
$|S_n(\theta)-S_n(\theta')|\leq \varepsilon/2$ for all $(\theta,
\theta')$ such that $|\theta- \theta'|_2\leq \xi_{p}$, thus  $P(W(n,\xi_{p})>\varepsilon)=0$.
We just proved the  consistency in probability  of the contrast  estimator $\hat \theta_n$ defined in (\ref{estimateur}).
\end{proof}

\begin{proof}[Proof of Theorem \ref{NA}]
By a Taylor expansion of $\dot{S}_n$ around $\theta_0$, we have
\begin{eqnarray}
0 = \dot{S}_n(\hat \theta_n) = \dot{S}_n(\theta_0) + \ddot S_n(\theta^*_n)(\hat \theta_n-\theta_0)
\end{eqnarray}
where $\theta^*_n$ lies in the line segment with extremities $\hat \theta_n$ and $\theta_0$.

\noindent {\bf Step 1.} Let us prove that
\begin{eqnarray}
\dot{S}_n(\theta_0)=\frac{-1}{2n(n-1)}\sum_{j \not= k, j,k=1}^n \int_{|u|\leq 1/h} \dot Z_k(\theta,u){Z}_j(\theta,u)dW(u)
\end{eqnarray}
is asymptotically normal $\sqrt{n}\dot S_n(\theta_0)\stackrel{d} \to N(0,V)$, in distribution.

Indeed, $\dot S(\theta_0)=0$ and $ J (\theta_0,u) = 0$ for all $u \in \R$ imply that
$$
E[\dot S_n(\theta_0)] = -\frac{1}{2} \int_{|u| \leq 1/h} \dot J(\theta_0,u) J(\theta_0,u) dW(u) = 0.
$$
Therefore we decompose
\begin{eqnarray*}
\dot S_n(\theta_0) 
&=& \frac{-1}{2n(n-1)}\sum_{j \not= k, j,k=1}^n \int_{|u|\leq 1/h}
\dot Z_k(\theta_0,u){Z}_j(\theta_0,u) dW(u)\\
&=& \frac{-1}{2n(n-1)}\sum_{j \not= k, j,k=1}^n \int_{|u|\leq 1/h}
\left[\dot Z_k(\theta_0,u)- \dot J(\theta_0,u) \right] {Z}_j(\theta_0,u) dW(u)\\
&& -\frac 1{2n} \sum_{k=1}^n \int_{|u|\leq 1/h}
{Z}_k(\theta_0,u) \dot J(\theta_0,u) dW(u) = : A_n+B_n.
\end{eqnarray*}

We shall see that $\sqrt{n}B_n$ gives the dominant behaviour in the limit in distribution. Indeed,
\begin{eqnarray*}
&&\|n \mbox{Var}(A_n) \| \\
&\leq & \frac 1{4(n-1)}\left\| E\left[\left(
\int_{|u|\leq 1/h} \dot Z_1(\theta_0,u) Z_2(\theta_0,u) dW(u)
\right)
\left(
\int_{|u|\leq 1/h} \dot Z_1(\theta_0,u) Z_2(\theta_0,u) dW(u)
\right)^\top
\right]\right\|\\
&\leq & \frac{C}{(1-2P)^6 n} \left( \int (1+|u|)dW(u) \right)^2 = o(1).
\end{eqnarray*}
The asymptotic behaviour of the distribution of $\sqrt{n} B_n$ is obtained by noticing that
\begin{eqnarray*}
\sqrt{n} B_n &=& C_n + D_n, \quad \mbox{where}\\
C_n &=& \frac 1{2\sqrt{n}} \sum_{k=1}^n U_n(\theta_0), \\
D_n &=& \frac 1{2\sqrt{n}}\sum_{k=1}^n[U_{k,n}(\theta_0)-U_k(\theta_0)],
\end{eqnarray*}
and
$$U_{k,n}(\theta_0) = \int_{|u|\leq 1/h}
{Z}_k(\theta_0,u) \dot J(\theta_0,u)dW(u) $$
is a centered variable which depends on $n$ via $h$,
\begin{eqnarray*}
U_{k}(\theta_0) &=& \int_{\R}  {Z}_k(\theta_0,u)  \dot J(\theta_0,u)dW(u)
\end{eqnarray*}
is a centered variable not dependent on $n$.
Note that $D_n = o_P(1)$ as
\begin{eqnarray*}
&& \frac 1n \left\|Var\left(\sum_{k=1}^n (U_{k,n}(\theta_0)-U_k(\theta_0) \right) \right\|\\
& \leq & \left\|E\left( \int_{|u|>1/h} Z_1(\theta_0,u) \dot J(\theta_0,u) dW(u) \cdot
\int_{|u|>1/h} Z_1(\theta_0,u) \dot J(\theta_0,u)^\top dW(u) \right)\right\|\\
& \leq & \left\| \int_{|u|>1/h} \frac 2{(1-2P)^2} \left(
\begin{array}{c}
2 \\ |u| \\ |u|
\end{array}\right) dW(u) \cdot
\int_{|u|>1/h} \frac 2{(1-2P)^2}  \left( \begin{array}{c}
2 \\ |u| \\ |u|
\end{array}\right)^\top dW(u) \right\|\\
& \leq & \frac {o(1)}{(1-2P)^4},
\end{eqnarray*}
as $h \to 0$, since every integral in the finite sum tends to 0 when $h \to 0$.
In a standard way, $C_n$ satisfies the following central limit theorem:
\begin{eqnarray}
\frac 1{2\sqrt{n}} \sum_{k=1}^n U_k(\theta_0)\stackrel{\mathcal L}{\longrightarrow} {\mathcal{N}}(0,V),\quad \quad n\rightarrow \infty,
\end{eqnarray}
where $V$ denotes covariance matrix of $U_1(\theta_0)$ which is equal to $1/4 \cdot E(U_1(\theta_0)U_1(\theta_0)^T)$ (and cannot be explicited due to the integral nature of the terms).

\noindent {\bf Step 2.} Let us prove that
\begin{eqnarray}
\ddot S_n(\theta^*_n)\stackrel{\mathbb P}{\longrightarrow} \mathcal{I}(\theta_0), \quad \quad n\rightarrow \infty.
\end{eqnarray}
where $\mathcal{I}=\mathcal{I}(\theta_0)=-\frac 12 \int \dot J (\theta_0,u) \dot J ^\top (\theta_0,u) dW(u)$.

We start by writing the triangular inequality
$$
\|\ddot S_n(\theta_n^*) - \mathcal{I}\|\leq \|\ddot S_n(\theta_n^*) - \ddot S_n(\theta_0)\| +\|\ddot S_n(\theta_0) - \mathcal{I}\|.
$$
Then we use the Lipschitz property of $\ddot S_n$, Lemma~\ref{bornesLip}, and the convergence in probability of $\hat \theta_n$ to $\theta_0$. Finally, we compute the limit of $\ddot S_n(\theta_0)$. Indeed
\begin{eqnarray*}
E(\ddot S_n(\theta_0))&=& -\frac 12 \int_{|u|\leq 1/h} (\ddot J(\theta_0,u) J(\theta_0,u) + \dot J(\theta_0,u) \cdot J(\theta_0,u)^\top) dW(u)\\
&=& -\frac 12 \int_{|u|\leq 1/h}\dot J(\theta_0,u) \cdot J(\theta_0,u)^\top dW(u),
\end{eqnarray*}
as $J(\theta_0,u) = 0$. We see that $E(\ddot S_n(\theta_0)) \to  \mathcal{I}(\theta_0)$, as $h \to 0$.
\end{proof}


\begin{proof}[Proof of the Theorem~\ref{nonparam}]

Note first that
\begin{eqnarray*}
E(f_n(x) )&=& E\left(\frac 1{2\pi}\int e^{-iux} \frac 1n \sum_{k=1}^n \frac{e^{iuX_k} K^*(b_n u)}{M(\hat\theta_{n,-k}, u)} du \right) \\
& = & \frac 1{2\pi}\int e^{-iux} g^*(u) K^*(b_n u) E\left( \frac 1{M(\hat\theta_{n,-1}, u)} \right)du.
\end{eqnarray*}
Recall that $\sup_{\theta \in \Theta} |M(\theta,u)| \geq 1-2P$, which means that
$ E(M^{-1}(\hat\theta_{n,-1}, u))\leq (1-2P)^{-1}$.

Let us write the usual bias-variance decomposition. For the bias, we have
\begin{eqnarray*}
E(f_n(x)) - f(x) &=&
\frac 1{2\pi}\int e^{-iux} g^*(u) \left( K^*(b_n u) E\left( \frac 1{M(\hat\theta_{n,-1}, u)}\right)
- \frac 1{M(\theta_0,u)} \right)du\\
&=& \frac 1{2\pi}\int e^{-iux} g^*(u)  K^*(b_n u) \left(E\left( \frac 1{M(\hat\theta_{n,-1}, u)}\right)
- \frac 1{M(\theta_0,u)} \right)du\\
&& + \frac 1{2\pi}\int e^{-iux} \frac{g^*(u)}{M(\theta_0,u)} ( K^*(b_n u)-1) du.
\end{eqnarray*}
Next, we use the facts that $|\sup_u K^*(u)| \leq 1$ and that the support of $K^*(b_n u)$ is included in $\{u: |u|\geq 1/b_n \}$ and get
\begin{eqnarray*}
|E(f_n(x)) - f(x)|&\leq &
\frac 1{2\pi} \left( \int |g^*(u)| | E(M^{-1}(\hat \theta_{n,-1}),u)-M^{-1}(\theta_0,u)| du \right.\\
&& \left. + \frac 1{1-2P} \int_{|u|\geq 1/b_n} |g^*(u)| du
\right) \\
&= & O(\frac 1{\sqrt{n}}) + O(1) \frac{b_n^{\beta - 1/2}}{1-2P}.
\end{eqnarray*}

For the variance, we write
\begin{eqnarray*}
&&Var(f_n(x)) \\
&= & E\left[ \left(\frac 1{2\pi n} \sum_{k=1}^n \int e^{-iux}K^*(b_n u) \left( \frac{e^{iuX_k} }{M(\hat\theta_{n,-k}, u)} - g^*(u) E\left(\frac 1{M(\hat \theta_{n,-1},u)}\right)\right) du  \right)^2\right]\\
&\leq & \frac 1{4\pi^2 n}  E\left[ E\left[\left(\int K^*(b_n u) \frac{e^{iuX_1} }{M(\hat\theta_{n,-1}, u)} du
\right)^2 / X_2,...,X_n \right]\right]\\
&\leq & \frac 1{4\pi^2 n}  E\left[ \left(\int K^*(b_n u) \frac{g^*(u) }{M(\hat\theta_{n,-1}, u)} du
\right)^2\right]
\leq \frac{\|K^*\|_2^2\|g^*\|_2^2}{4\pi^2 (1-2P)^2 n b_n}.
\end{eqnarray*}
Therefore, for $b_n= c n^{-(\beta-1/2)/(2\beta)}$ we get the upper bounds in our theorem.
\end{proof}


\noindent Pierre Vandekerkhove, D\'epartement d'Analyse et de  Math\'ematiques Appliqu\'ees, Universit\'e  Paris-Est Marne-la-Vall\'ee, 5 Bd Descartes, Champs-sur-Marne, 77454 Marne-la-Vall\'ee Cedex 2.\\
\noindent Email: pierre.vandek@univ-mlv.fr

\end{document}